\documentclass[11pt]{article}

\usepackage[margin=1.1in]{geometry}
\usepackage{amsmath,amssymb,amsthm}
\usepackage{array,booktabs,tabularx}
\usepackage[colorlinks=true,linkcolor=blue,citecolor=blue,urlcolor=blue]{hyperref}

\newtheorem{theorem}{Theorem}
\newtheorem{lemma}[theorem]{Lemma}
\newtheorem{corollary}[theorem]{Corollary}
\newtheorem{proposition}[theorem]{Proposition}
\theoremstyle{remark}
\newtheorem{remark}[theorem]{Remark}

\newcommand{\defi}{\operatorname{def}}
\DeclareMathOperator{\exc}{exc}
\newcommand{\floor}[1]{\left\lfloor #1\right\rfloor}
\newcommand{\ceil}[1]{\left\lceil #1\right\rceil}
\newcommand{\clawconstant}{c_{\mathrm{claw}}}

\title{Improved Bounds for Unavoidable Claws in Tournaments}
\author{
Jiangdong Ai\thanks{School of Mathematical Sciences and LPMC, Nankai University,
Tianjin 300071, P.R. China. Email: \href{mailto:jd@nankai.edu.cn}{\nolinkurl{jd@nankai.edu.cn}}.
The research of this author was supported by the National Natural Science Foundation
of China No. 12522117.}
\and
Yongxin Lan\thanks{School of Science, Hebei University of Technology,
Tianjin 300401, P.R. China. Email: \href{mailto:yxlan@hebut.edu.cn}{\nolinkurl{yxlan@hebut.edu.cn}}.}
}
\date{}

\begin{document}
\maketitle

\begin{abstract}
Let $u(n)$ be the largest integer $d$ such that every $n$-vertex claw with at most $d$ branches occurs in every tournament on $n$ vertices, and let $\clawconstant=\limsup_{n\to\infty}u(n)/n$.
In 1998, Lu, Wang and Wong proved that $19/50\le\clawconstant\le11/23$, and these have remained the best bounds known. We improve them to $2/5\le\clawconstant\le10/21$. We also isolate two parameters $\theta$ and $\sigma$ which place the lower- and upper-bound arguments in a common framework: we show $\sigma\le\theta$ and $1/21\le\sigma\le\theta\le1/5$, our two bounds being the images of the endpoints under $\alpha\mapsto\frac12-\frac{\alpha}{2}$, and $\sigma=\theta$ would force $\lim u(n)/n$ to exist.
\end{abstract}

\section{Introduction}\label{sec:intro}

An oriented graph $H$ on $n$ vertices is \emph{$n$-unavoidable} if every tournament on $n$ vertices contains a copy of $H$; otherwise it is \emph{$n$-avoidable}. R\'edei's theorem that every tournament contains a directed Hamilton path is the classical first example \cite{Redei}. Saks and S\'os initiated a systematic study of spanning unavoidable digraphs and, in particular, rooted directed trees whose branches are directed paths \cite{SaksSos}.

Let $\lambda=(\lambda_1,\ldots,\lambda_k)$ be a partition of $n-1$ into positive integers. The \emph{claw} $C(\lambda)$ is obtained from $k$ pairwise vertex-disjoint directed paths of lengths $\lambda_1,\ldots,\lambda_k$ by identifying their initial vertices. The common vertex is the root, the paths are the branches, and the degree of the claw is $k$. For $k\ge2$, this is also the maximum degree of the underlying tree; the distinction for a one-branch claw will not matter below. We abbreviate repeated parts by exponents: $C(3^{r},2^{s})$ denotes the claw with $r$ branches of length $3$ and $s$ branches of length $2$; it has degree $r+s$ and $3r+2s+1$ vertices. An exponent equal to $0$ means that the corresponding part is absent. The \emph{excess} of $\lambda$ is $\exc(\lambda)=\sum_i(\lambda_i-2)^+$, which measures by how much the branches of $C(\lambda)$ exceed length $2$.

Saks and S\'os conjectured that every $n$-vertex claw of degree at most $\floor{n/2}$ is $n$-unavoidable. Lu disproved this conjecture and proved successively that degree at most $n/4$ and then $3n/8$ is sufficient \cite{Lu1991,Lu1993}; he also obtained the asymptotic upper bound $25/52$ \cite{Lu1996}. Lu, Wang and Wong later proved that every claw of degree less than $19n/50$ is unavoidable and constructed avoidable claws of degree asymptotic to $11n/23$ \cite{LuWangWong}. These are still the bounds recorded in later work and in the 2024 survey of Stein \cite{MycroftNaia,SteinSurvey}. As far as we are aware, the present results are the first improvements to either side since 1998.

Let $u(n)$ be the largest integer $d$ such that every $n$-vertex claw of degree at most $d$ is $n$-unavoidable, and let $\clawconstant=\limsup_{n\to\infty}u(n)/n$. Thus the previous bounds read $19/50\le\clawconstant\le11/23$. We improve both sides.

\begin{theorem}\label{thm:lower}
For every $n\ge5$, every $n$-vertex claw of degree at most $\floor{2n/5}$ is $n$-unavoidable.
\end{theorem}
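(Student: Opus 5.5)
We would prove the statement by induction on the structure of the claw, reducing every claw of degree $k\le\floor{2n/5}$ to a small family of extremal claws and then embedding those directly. The reduction uses the excess. Let $\lambda$ be a partition of $n-1$ into $k$ parts. Since $k\le 2n/5$, we have $\exc(\lambda)\ge n-1-2k$, so long branches are plentiful. The first step is a reduction lemma. If some branch has length at least $3$, and the claw obtained by shortening that branch by one (on $n-1$ vertices) embeds in every tournament on $n-1$ vertices, then we try to extend the embedding. Concretely, we pick a vertex $v$ of the tournament $T$, embed the smaller claw in $T-v$, and reinsert $v$ along some branch; this works by the standard R\'edei-type insertion argument whenever $v$ has the right in/out pattern relative to consecutive vertices of a branch. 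Since the degree bound $\floor{2n/5}$ is not monotone under deleting vertices, the induction hypothesis has to be applied only when $k\le\floor{2(n-1)/5}$. Otherwise we argue directly.

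The second step handles the claws where induction is blocked, or where all branches are short. We normalize, using the classical result that branches of length $1$ can be absorbed: a root of high out-degree gives leaves for free. This reduces the problem to claws of the form $C(3^{r},2^{s})$ together with a bounded number of longer branches. We then choose the root $x$ to be a vertex of maximum out-degree, so $d^+(x)\ge (n-1)/2$. We find a large matching, or a family of directed paths, from $N^+(x)$ into the rest of the tournament. To cover branches of length $2$ we need a matching of size $s$ from $N^+(x)$ into $N^-(x)$ (or within $N^+(x)$); a K\H{o}nig/Hall deficiency argument, measured by $\defi$, bounds what is missing. Branches of length $3$ are then built by extending matched edges. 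If Hall fails, we get a set $A\subseteq N^+(x)$ whose out-neighbourhood is small. In that case either some vertex of $A$ has larger out-degree than $x$, which is a contradiction, or $A$ spans a subtournament dense enough to host paths inside $N^+(x)$ by R\'edei's theorem.

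The main obstacle is the counting in this second step at the threshold $2/5$. It is exactly where $r$ branches of length $3$ and $s$ of length $2$, with $3r+2s+1=n$ and $r+s=2n/5$, force $d^+(x)$ to be just large enough. We expect to need a second choice of root when $x$ fails, namely a vertex of maximum out-degree in $N^-(x)$, or a median-order argument. We also expect a careful deficiency inequality showing that the Hall-violating set $A$ either yields a vertex of larger out-degree or supplies paths of length $3$ internally. We would first try a median order of $T$ and root the claw at its first vertex, using the feedback property to build the length-$2$ and length-$3$ branches greedily along the order. Small cases $5\le n\le$ some constant would be checked separately.
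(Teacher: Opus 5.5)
There is a genuine gap: the step that carries the whole difficulty is left as an expectation. By Lemma~\ref{lem:dominance} and Lemma~\ref{lem:balanced}, a target of degree $k$ may be replaced by the balanced claw with $k$ branches. So everything comes down to showing that every tournament on $N+1$ vertices has a spanning claw with branch lengths in $\{1,2,3\}$ and at most $\floor{N/5}$ branches of length $3$. The binding targets are $C(3^{r},2^{k-r})$ with $k=\floor{2n/5}$ and $r=N-2k$. For $n\equiv0,3\pmod 5$ these have $k>\floor{2(n-1)/5}$, so your first step never reaches them. They all land in your second step, where you only list candidate strategies and call the counting at $2/5$ ``the main obstacle''---that obstacle is the theorem. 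The first step is also unjustified. The induction hypothesis gives a copy of the shortened claw in $T-v$ with an uncontrolled root $x'$. The vertex $v$ cannot be inserted into a prescribed branch exactly when $v\to x'$ and $v$ dominates every vertex of that branch, and nothing in your argument chooses $v$ to avoid this.

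What is missing is a concrete mechanism bounding the number of length-$3$ branches by $N/5$. In the paper it has two stages.

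\emph{First stage.} At a maximum-outdegree root $x$, outdegree counting gives $|Z|\le2|\Gamma(Z)|-1$ for every nonempty $Z\subseteq N^-(x)$. Hence every vertex of $N^-(x)$ can be assigned to an in-neighbour in $N^+(x)$ with loads at most $2$ and with exactly $\defi(G_x)=d$ parents of load $2$ (Lemma~\ref{lem:capacity}). A matching of prescribed size, as in your sketch, neither makes the claw spanning nor controls this count.

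\emph{Second stage.} If $d>N/5$, take a maximum-deficiency witness $Y\subseteq N^-(x)$, and set $A=\Gamma(Y)$, $B=N^+(x)\setminus A$, $C=N^-(x)\setminus Y$. Since $Y\to B\cup\{x\}$, counting gives $e(A,Y)\ge|Y|(|Y|+1)/2$ and $|B|\ge d+h$ with $h=d(d+1)/(2|A|)$. It also gives that every vertex of $Y$ has outdegree at most $|A|-1$ in $T[A\cup Y]$. Hence the new root $w$ is a maximum-outdegree vertex of $T[A\cup Y]$ lying in $A\subseteq N^+(x)$, not a vertex of $N^-(x)$ as you propose. One then:
\begin{itemize}
\item builds the depth-$3$ claw of $T[A\cup Y]$ at $w$;
\item routes $C$ through a matching $B\to C$ hung below zero-load vertices of $N^+(w)\cap Y$;
\item places $x$ and the rest of $B$ into the spare capacity of $N^+(w)\cap Y$, which dominates them.
\end{itemize}
The bounds $|N^+(w)\cap Y|\ge d+1+h$ and $|N^-(w)\cap(A\cup Y)|\le|A|-1-h$, together with $N<5d$, make both absorptions fit. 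They leave fewer than $N-4d-1<N/5$ branches of length $3$.

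Without this re-rooting-and-absorption argument, or a substitute with explicit counting, the proposal is a plan rather than a proof. Your median-order idea comes with no estimate reaching $2/5$.
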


\begin{theorem}\label{thm:upper}
Let $n\ge441$, and write $n=21m+\ell$, where $0\le\ell\le20$. There is an $n$-vertex tournament which does not contain $C\bigl(3^{m-\ell-1},2^{9m+2\ell+1}\bigr)$.
This claw has degree $10m+\ell$, and consequently $u(n)\le10m+\ell-1$.
\end{theorem}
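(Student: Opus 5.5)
We need a construction for Theorem \ref{thm:upper}. First the arithmetic: the claw $C(3^{m-\ell-1},2^{9m+2\ell+1})$ has $3(m-\ell-1)+2(9m+2\ell+1)+1=21m+\ell=n$ vertices and degree $10m+\ell$. Also $m\ge 21>\ell+1$ because $n\ge441$, so both exponents are nonnegative.

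The plan is to build a tournament in the style of Lu, Wang and Wong, in which the root can only sit in a few places and every placement of the root leaves too few usable vertices at distance $1$, $2$ and $3$ along out-paths. Concretely, I would take a vertex set partitioned into a small ``top'' part $A$ and a large part $B$. All arcs go from $B$ to $A$, except for a controlled set. Inside $B$ I would put a regular (or nearly regular) tournament, so that every vertex of $B$ has out-degree about $(|B|-1)/2$ within $B$.

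The first key observation is the out-degree constraint. The root $r$ needs out-degree at least $10m+\ell$. A vertex of $B$ has out-degree roughly $|B|/2+|A|$ (if all arcs go $B\to A$). Vertices of $A$ have small out-degree into $B$ and are unlikely to be roots. Next, each branch of length $2$ or $3$ starting at $r$ uses an out-neighbour $x$ of $r$ and then an out-neighbour of $x$ that is not already used. Leaves of branches that land in $A$ are cheap, but vertices of $A$ cannot serve as middle vertices, since they have few out-neighbours. So I would count the vertices that can serve as second vertices of branches (first vertices must be out-neighbours of $r$), and show that the remaining vertices, roughly the in-neighbourhood of $r$, cannot all be covered. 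Every non-root vertex must lie on some branch, and a vertex $v$ in the in-neighbourhood of $r$ must be reached at depth $\ge2$. So the number of depth-$\ge2$ slots, which is $(n-1)-\text{degree}=11m-1$, must be at least the in-degree of $r$. Moreover, each in-neighbour $v$ needs an in-neighbour among the depth-$1$ or depth-$2$ vertices.

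The main design is to choose $|A|$, $|B|$ and the inner structure (likely $A$ itself split into a regular tournament plus a transitive piece, with ratios coming from $21$: maybe $|A|\approx m$, $|B|\approx 20m$) so that for every candidate root the inequality ``in-neighbours of $r$ that need covering $\le$ available depth-$2$/depth-$3$ slots compatible with arc directions'' fails by at least one. The term $3^{m-\ell-1}$ limits depth-$3$ slots to $m-\ell-1$, and the $+\ell$ adjustments should match residues. I would verify it by a Hall-type counting argument: vertices of $A$ which are in-neighbours of $r$ can only be entered from specific vertices, and vertices of the transitive part force long branches.

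The hardest step is finding the exact construction parameters and carrying out the case analysis over root location. I would first try the Lu–Wang–Wong $11/23$ template, reparametrised with $21$ blocks, and adjust by a linear-programming style optimisation of the counting inequalities until the threshold $10/21$ with the exact $+\ell$ terms emerges.
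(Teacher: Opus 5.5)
Your vertex count and degree check are right (one small slip: $m\ge21>\ell+1$ fails for $\ell=20$, but $m\ge\ell+1$ holds and suffices). Beyond that, the proposal is a search plan rather than a proof. For an existence statement, the construction and the check at every possible root are the whole content, and you supply neither.

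The one inequality you do state gives no obstruction. A root needs outdegree at least $10m+\ell$, so its indegree is automatically at most $(n-1)-(10m+\ell)=11m-1$, the number of depth-$\ge2$ slots.

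What is missing is the right invariant at the root. Let $G_x$ be the bipartite graph of arcs from $N^+(x)$ to $N^-(x)$. In a spanning claw rooted at $x$:
\begin{itemize}
\item the branch arcs going from $N^+(x)$ into $N^-(x)$ form a matching in $G_x$;
\item the first in-neighbour of $x$ on any branch is matched, because its predecessor lies in $N^+(x)$;
\item hence a branch of length $\lambda_i$ leaves at most $(\lambda_i-2)^+$ in-neighbours unmatched, and $\defi(G_x)\le\exc(\lambda)$.
\end{itemize}
For $C(3^{m-\ell-1},2^{9m+2\ell+1})$ this bound is $m-\ell-1$. So you need an $n$-vertex tournament with $\defi(G_x)\ge m-\ell$ at \emph{every} vertex $x$.

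Your $A/B$ sketch does not provide that. With all arcs directed from $B$ to $A$, the vertices of $A$ are isolated in $G_x$ for every $x\in B$. They also have outdegree far below $10m+\ell$, so they are dead weight. The whole obstruction would have to come from the tournament on $B$, and near-regularity does not help there: a random tournament has $\defi(G_x)=o(n)$ at every vertex with high probability. What you need is a Hall violator of size about $n/21$ at every vertex simultaneously.

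The paper gets this from a $12$-vertex template $H$ with weights $1,1,1,4,1,1,4,4,1,1,1,1$ (total $21$). For each $i$ it gives sets $Y_i\subseteq N_H^-(i)$ and $Z_i\subseteq O_i=N_H^+(i)$ with $Y_i\to Z_i$ and $a(Y_i)+a(Z_i)-a(O_i)=1$. Class $i$ is blown up into a tournament $V_i$ of order $a_im+e_i$, with $\sum_ie_i=\ell$. For $x\in V_i$, the set $\bigcup_{j\in Y_i}V_j\subseteq N^-(x)$ has its $G_x$-neighbourhood inside $\bigcup_{j\in O_i\setminus Z_i}V_j$. This holds because it points to all of $V_i$ and to every $V_j$ with $j\in Z_i$, and it gives $\defi(G_x)\ge m-\ell$.

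Your plan contains no analogue of such a certificate. Reparametrising the Lu--Wang--Wong construction gives no reason to expect the bound to move from $11/23$ to $10/21$.
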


Theorems~\ref{thm:lower} and \ref{thm:upper} give $\liminf_{n\to\infty}u(n)/n\ge2/5$ and $\limsup_{n\to\infty}u(n)/n\le10/21$, respectively. In particular, we obtain the following interval.

\begin{corollary}\label{cor:interval}
We have $\frac25\le\clawconstant\le\frac{10}{21}$.
\end{corollary}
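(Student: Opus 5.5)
The corollary is a formal consequence of Theorems~\ref{thm:lower} and~\ref{thm:upper}, so the plan is to convert each theorem into an asymptotic bound on $u(n)/n$ and then pass to the $\limsup$.

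\emph{Lower bound.} For every $n\ge5$, Theorem~\ref{thm:lower} says that every $n$-vertex claw of degree at most $\floor{2n/5}$ is $n$-unavoidable. By the definition of $u(n)$ as the largest integer with this property, this gives $u(n)\ge\floor{2n/5}\ge 2n/5-1$. Dividing by $n$ yields $u(n)/n\ge 2/5-1/n$. Taking the $\liminf$, and hence also the $\limsup$, gives $\clawconstant\ge2/5$.

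\emph{Upper bound.} For $n\ge441$ write $n=21m+\ell$ with $0\le\ell\le20$. Before using Theorem~\ref{thm:upper}, check that the claw $C(3^{m-\ell-1},2^{9m+2\ell+1})$ is well defined:
\begin{itemize}
\item Since $n\ge441$ we have $m\ge21$, so $m-\ell-1\ge0$ and both exponents are nonnegative.
\item The vertex count is $3(m-\ell-1)+2(9m+2\ell+1)+1=21m+\ell=n$.
\item The degree is $(m-\ell-1)+(9m+2\ell+1)=10m+\ell$.
\end{itemize}
Theorem~\ref{thm:upper} states that this claw is $n$-avoidable. Any integer $d\ge10m+\ell$ would therefore fail the defining property of $u(n)$, so $u(n)\le10m+\ell-1$. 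Using $m=(n-\ell)/21$,
\[
u(n)\le \frac{10(n-\ell)}{21}+\ell-1\le\frac{10n}{21}+\frac{11\cdot20}{21}.
\]
Dividing by $n$ and letting $n\to\infty$ gives $\limsup u(n)/n\le10/21$.

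Combining the two bounds gives $\frac25\le\clawconstant\le\frac{10}{21}$. There is no real obstacle; the only points needing care are the definitional step (an avoidable claw of degree $d'$ forces $u(n)<d'$) and the bounded error term $\ell\le20$, which vanishes after dividing by $n$.
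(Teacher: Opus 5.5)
Your proposal is correct and follows the paper's route: the paper obtains the corollary directly from Theorems~\ref{thm:lower} and~\ref{thm:upper}, reading them as $\liminf_{n\to\infty}u(n)/n\ge 2/5$ and $\limsup_{n\to\infty}u(n)/n\le 10/21$. Your checks of the exponents, the vertex count, the degree and the bounded error term $\ell\le 20$ are accurate and simply write out the routine details the paper leaves implicit.
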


The main ingredient for the lower bound is the following spanning structure.

\begin{theorem}\label{thm:structural}
Let $T$ be a tournament on $n\ge2$ vertices and let $N=n-1$. Then $T$ contains a spanning claw $C(\mu)$ such that every part of $\mu$ belongs to $\{1,2,3\}$, at most $\floor{N/5}$ parts are equal to $3$, and $\mu$ has at least $\floor{2(N+1)/5}=\floor{2n/5}$ parts.
\end{theorem}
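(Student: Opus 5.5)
Our plan is to fix a root vertex $v$ chosen to maximise its out-degree (a king), so that every vertex is reached from $v$ by a directed path of length at most $2$. Let $A=N^+(v)$ and $B=V(T)\setminus(A\cup\{v\})$, with $|A|=a$ and $|B|=b=N-a$. A spanning claw rooted at $v$ with parts in $\{1,2,3\}$ is then the same thing as a partition of $V(T)\setminus\{v\}$ into directed paths of at most $3$ vertices, each starting in $A$. A path $x$ (one vertex of $A$) contributes a part $1$; a path $x\to y$ contributes a part $2$; a path $x\to y\to z$ contributes a part $3$. Every vertex of $B$ must lie on a path that starts in $A$. We want at least $\floor{2n/5}$ paths, of which at most $\floor{N/5}$ have length $3$.

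First, by the choice of $v$ we have $a\ge (N)/2$ on average. More precisely, every $y\in B$ has an in-neighbour in $A$, and in fact $d^+(v)\ge d^+(y)$ forces $y$ to have at least $|N^+(y)\cap A|+1$ in-neighbours in $A$ by the usual king argument. We then take a maximum matching $M$ from $A$ into $B$ in the bipartite digraph of arcs $A\to B$. Each matched pair becomes a part $2$, and each unmatched vertex of $A$ becomes a part $1$. This yields exactly $a$ parts, which is at least $\floor{2n/5}$ whenever $a\ge 2n/5$. The unmatched vertices $B'\subseteq B$ must then be absorbed by extending paths to length $3$. For this step we use König/Hall: a Hall violator $S\subseteq B$ with $|N^-(S)\cap A|<|S|$ lets us route the vertices of $S$ via vertices of $B$, using either $x\to y\to z$ with $y\in B$ matched, or a path inside $A\cup B$ of the form $x\to x'\to z$ with $x'\in A$. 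Each absorbed vertex costs at most one part-$3$.

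The counting works as follows. Let $k$ be the number of parts equal to $3$. Then the number of parts is $N-(\text{number of second and third vertices}) = N-(|M'|+k)$, where $M'$ is the set of $2$-vertex path pieces. To obtain at least $2n/5$ parts with $k\le N/5$, we need roughly $|M'|+k\le 3N/5$. The key is to choose, among all valid path partitions rooted at $v$, one that lexicographically minimises the number of $3$-paths and then maximises the number of paths. We would then show by local exchanges that $3k\le$ (number of $1$-paths)$+\dots$ and similar inequalities. The local exchanges are: rerouting $x\to y\to z$ into $x\to z$ plus $y$ started from another $A$-vertex; merging two $1$-paths; and splitting a $3$-path when $z$ has a spare in-neighbour in $A$. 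Each exchange yields a linear inequality among the counts $p_1,p_2,p_3$ of parts of each length, and a suitable convex combination of these inequalities should give $p_3\le N/5$ and $p_1+p_2+p_3\ge 2n/5$. We expect the critical configuration to be $p_1:p_2:p_3$ balanced so that $5$ vertices carry $2$ paths, matching the constant $2/5$. Small $n$ (say $n\le 10$) can be checked separately to handle the floors.

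The main obstacle is the exchange analysis that bounds $p_3$. When $p_3$ is large, many $B$-vertices have all their $A$-in-neighbours already used as path starts. We must then exploit the maximality of $d^+(v)$: such a $z$ beats many of the vertices of $A$. This gives a density argument which, via the tournament arc count inside the set of "stuck" vertices, forces either an available reroute or a violation of the choice of $v$. We would first try to prove directly that the set of $3$-path endpoints $Z$ satisfies $|Z|\le |A_1|$, where $A_1$ denotes the $1$-path starts. We would do this by an alternating-path argument in the auxiliary bipartite graph, comparing in- and out-degrees inside $Z\cup A_1$ against $d^+(v)$.
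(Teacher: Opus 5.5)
\paragraph{Where the proposal breaks.} You keep the root fixed at a maximum-outdegree vertex $v$ and hope that exchanges among claws rooted at $v$ bring $p_3$ down to $\floor{N/5}$. No such argument can work.

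In any spanning claw rooted at $v$ with parts in $\{1,2,3\}$, every vertex of your set $B=N^-(v)$ lies at depth $2$ or $3$. Pair each vertex of $B$ whose parent lies in $A$ with that parent. This gives a matching in the bipartite graph of arcs from $A$ to $B$. Every unpaired vertex of $B$ has its parent in $B$, so it is the third vertex of its own length-$3$ branch. Hence $p_3\ge\defi(G_v)$ for every such claw (this is Lemma~\ref{lem:rootobstruction}).

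This deficiency can exceed $N/5$ even when $v$ is the \emph{unique} vertex of maximum outdegree. Take a vertex $v$, a regular tournament $X_1$ on $21$ vertices, a regular tournament $X_2$ on $39$ vertices and any tournament $X_3$ on $29$ vertices, with $v\to X_1\cup X_3$, $X_2\to v$, $X_1\to X_2$, $X_2\to X_3$ and $X_3\to X_1$. Then $d^+(v)=50$, every other vertex has outdegree at most $49$, and $N=89$. Only $X_1$ sends arcs into $N^-(v)=X_2$, so every claw of the required shape rooted at $v$ has $p_3\ge39-21=18>17=\floor{N/5}$. The analogous family with $|X_1|=a$ odd, $|X_2|=2a-3$, $|X_3|=(3a-5)/2$ has deficiency about $2N/9$. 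So the convex combination of exchange inequalities you hope for does not exist.

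Two smaller points. Your intermediate target $p_3\le p_1$ would only give $p_3\le N/4$ via $N=p_1+2p_2+3p_3$. The degree condition for $y\in N^-(v)$ reads $|N^-(y)\cap A|\ge1+|N^+(y)\cap B|$, not $|N^-(y)\cap A|\ge|N^+(y)\cap A|+1$.

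\paragraph{The missing idea: change the root.} Your matching-and-absorption step at $v$ corresponds to the easy case of the paper. There, a capacity-two Hall assignment (Lemma~\ref{lem:capacity}) yields exactly $d=\defi(G_v)$ branches of length $3$ and $d^+(v)\ge\ceil{N/2}$ branches in total, which finishes when $d\le\floor{N/5}$.

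When $d>\floor{N/5}$, the paper proceeds as follows.
\begin{itemize}
\item It takes a maximum-deficiency witness $Y\subseteq N^-(v)$ and sets $A'=\Gamma(Y)$, so that $Y\to N^+(v)\setminus A'$ and $Y\to v$.
\item Summing outdegrees over $Y$ against $d^+(v)$ forces at least $|Y|(|Y|+1)/2$ arcs from $A'$ to $Y$. Summing over $A'$ gives $|N^+(v)\setminus A'|\ge d+d(d+1)/(2|A'|)$.
\item Consequently a maximum-outdegree vertex $w$ of $T[A'\cup Y]$ lies in $A'$ and has at least $d+1+d(d+1)/(2|A'|)$ outneighbours in $Y$.
\item It re-roots at $w$. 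Below unused outneighbours of $w$ in $Y$ it hangs $v$, $N^+(v)\setminus A'$, and also $N^-(v)\setminus Y$, the last through a matching from $N^+(v)\setminus A'$ whose existence follows from the choice of $Y$.
\item The count then gives fewer than $N-4d-1<N/5$ branches of length $3$.
\end{itemize}
In the example above, $w\in X_1$ and all of $X_2$ becomes available to start branches.
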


For a maximum-outdegree vertex $v$, the arcs from $N^+(v)$ to $N^-(v)$ form a bipartite graph. A capacity-two Hall assignment gives a spanning claw rooted at $v$ of depth at most $3$, in which a vertex of $N^+(v)$ carrying $0$, $1$ or $2$ vertices of $N^-(v)$ begins a branch of length $1$, $2$ or $3$; the number of length-$3$ branches is exactly the Hall deficiency $d$ at $v$. Since a branch of length $3$ spends three vertices to produce a single branch, small deficiency means many branches. If $d$ exceeds $N/5$, then a maximum-deficiency witness $Y\subseteq N^-(v)$ forces the arcs from $A=\Gamma(Y)$ to $Y$ to be dense, which in turn produces a vertex $w\in A$ of large outdegree in $T[A\cup Y]$. Re-rooting the claw at $w$ and absorbing the vertices outside $A\cup Y$ below $N^+(w)\cap Y$ then yields a spanning claw with at most $\floor{N/5}$ branches of length $3$. Theorem~\ref{thm:lower} follows from Theorem~\ref{thm:structural} and the dominance order on partitions: a target claw $C(\lambda)$ of degree at most $\floor{2n/5}$ dominates the balanced claw with the same number of branches, which in turn dominates the claw supplied by Theorem~\ref{thm:structural}.

The upper bound has a different form. Lemma~\ref{lem:rootobstruction} bounds the Hall deficiency at the root of a spanning claw $C(\lambda)$ from above by $\exc(\lambda)$, while Lemma~\ref{lem:weightedtemplate} bounds the Hall deficiency from below at \emph{every} vertex of a weighted blow-up, using one Hall witness per class of the template. Choosing $\lambda$ so that the two bounds conflict rules out $C(\lambda)$ altogether. Section~\ref{sec:bottleneck} makes the symmetry between the two arguments precise: they are governed by parameters $\theta$ and $\sigma$ with $\sigma\le\theta$, and our two theorems say $\theta\le1/5$ and $\sigma\ge1/21$.

\section{Preliminaries}\label{sec:prelim}

All tournaments and digraphs are finite. For a digraph $D$ and $v\in V(D)$, we write $N_D^+(v)$ and $N_D^-(v)$ for the out- and in-neighbourhood of $v$ and $d_D^+(v)=|N_D^+(v)|$; the subscript is
omitted when $D$ is clear from the context. For disjoint vertex sets $X,Y$ in a tournament, write $X\to Y$ if every arc between them is directed from $X$ to $Y$, and let $e(X,Y)$ be the number of arcs from $X$ to $Y$.

A partition is a nonincreasing sequence of positive integers; trailing zeros are appended when two partitions of the same integer are compared. If $\lambda$ and $\mu$ have the same sum, then $\lambda$ \emph{dominates} $\mu$, written $\lambda\succeq\mu$, if $\sum_{i=1}^j\lambda_i\ge\sum_{i=1}^j\mu_i$ for every $j\ge1$. We use the following result of Saks and S\'os \cite[Lemma 2.1]{SaksSos}.

\begin{lemma}\label{lem:dominance}
Let $\lambda$ and $\mu$ be partitions of the same integer with $\lambda\succeq\mu$. If a tournament $T$ contains $C(\mu)$, then it also contains $C(\lambda)$.
\end{lemma}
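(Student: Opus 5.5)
The statement is the Saks--S\'os dominance lemma. Since dominance is the transitive closure of elementary moves, I would first reduce to a single move and then handle that move by a local rerouting inside a fixed copy of $C(\mu)$.

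\emph{Reduction.} Dominance on partitions of a fixed integer is generated by elementary transfers. Such a transfer takes $\mu$, chooses indices $i<j$ with $\mu_i\ge\mu_j$ (allowing $\mu_j$ to be the last positive part), and replaces $(\mu_i,\mu_j)$ by $(\mu_i+1,\mu_j-1)$. If $\mu_j-1=0$, that part is deleted. After the transfer we re-sort. This is the classical fact (Brylawski / Muirhead) that $\lambda\succeq\mu$ holds iff $\lambda$ is reached from $\mu$ by a finite sequence of such moves. So by induction on the length of this chain it suffices to prove the following: if $T$ contains $C(\mu)$ and $\lambda$ arises from $\mu$ by moving one unit from a branch of length $b=\mu_j$ to a branch of length $a=\mu_i\ge b$, then $T$ contains $C(\lambda)$. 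Branches other than these two are untouched. So we work in the subtournament spanned by the root $r$ and the two branches $P=r p_1\cdots p_a$ and $Q=r q_1\cdots q_b$. This is a claw $C(a,b)$, and we must find $C(a+1,b-1)$ on the same vertex set with the same root $r$. The other branches then stay disjoint.

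\emph{Two-branch step.} Since $a\ge b$, compare the arc between $p_a$ and $q_b$, and more generally use a walking argument. The aim is to find an index $k\le b$ with $p_{a-b+k}\to q_{k}$ (shifted alignment) or a similar rerouting. Consider the pairs $(p_{a-b+t},q_t)$ for $t=1,\dots,b$, and also the ``boundary'' relations $r\to q_1$ and $r\to p_{a-b+1}$ at level $0$, where we set $p_{a-b}$ to be $r$ or an earlier $p$.

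Suppose $q_t\to p_{a-b+t+1}$ for some $t$. Then we can form the branches
\[
r p_1\cdots p_{a-b+t}\;\cdots
\]
More cleanly: choose the largest $t\in\{0,\dots,b-1\}$ such that the arc goes from the $P$-side vertex at position $a-b+t$ to $q_{t+1}$, with position $0$ meaning $r$, for which this holds trivially at $t=0$. When $b\le a$ one can check that this holds at $t=0$. Maximality at $t$ gives $q_{t+2}\to p_{a-b+t+1}$, or else $t=b-1$. We then splice the branches as
\[
r p_1\cdots p_{a-b+t}\,q_{t+1}\cdots q_b \quad\text{and}\quad r q_1\cdots q_t\ \text{followed by the tail } p_{a-b+t+1}\cdots p_a .
\]
This needs $q_t\to p_{a-b+t+1}$, which is exactly the tournament alternative when the forward arc fails at the next index. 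Counting, the new branches have lengths $a-b+t+(b-t)=a$ and $t+(b-t)=b$. That is the same multiset, so this swap alone does not suffice. The real step instead uses the extra vertex $p_a$: we search for $t$ with $q_t\to p_{a-b+t+2}$, or append $p_a$ at the end of $Q$ when $q_b\to p_a$. The lengths then become $(a+1,b-1)$ after an index shift by one.

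\emph{Main obstacle.} The delicate part is choosing the correct alignment offset so that a discrete intermediate-value argument applies. At one end, $t=0$, the required arc is guaranteed by the root, since $r\to$ both $p_1,q_1$. At the other end, the failure of the required arc forces the arc $q_b\to p_a$ (or its reverse), which yields the desired claw directly. Between these, the first sign change gives a consistent splice. I expect the bookkeeping of the offset to be the main source of error, in particular ensuring that the new branch lengths are exactly $a+1$ and $b-1$ and that the hypothesis $a\ge b$ is used there. I would verify the splice on the small cases $C(1,1)\to C(2)$ and $C(2,2)\to C(3,1)$ before writing the general version.
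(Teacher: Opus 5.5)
Your reduction to a single elementary transfer is fine; the paper itself simply cites Saks--S\'os for this lemma. The gap is the two-branch step, which is the entire content of the lemma. It is not established, and the splice framework you set up cannot establish it.

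First, the boundary case of your intermediate-value argument fails when $a>b$. The root dominates only $p_1$ and $q_1$. So neither $p_{a-b}\to q_1$ nor $r\to p_{a-b+1}$ is guaranteed, and neither are the arcs $p_{a-b+1}\to q_1$ and $r\to p_{a-b+2}$ that your corrected splice needs at $t=0$. Your claim holds only when $a=b$.

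Second, the fallback ``append $p_a$ at the end of $Q$ when $q_b\to p_a$'' moves a vertex from the longer branch to the shorter one. That yields lengths $(a-1,b+1)$, the wrong direction; only $p_a\to q_b$ gives the target directly.

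Third, and decisively, every move you propose exchanges a prefix of one branch with a suffix of the other, and no choice of offset makes such moves sufficient. Take $a=2$, $b=1$ with $p_1\to q_1\to p_2$ and $p_2\to r$. Both available splices fail: $r\,p_1\,p_2\,q_1$ needs $p_2\to q_1$, and $r\,q_1\,p_1\,p_2$ needs $q_1\to p_1$. The only $C(3)$ rooted at $r$ on these four vertices is $r\,p_1\,q_1\,p_2$, in which $q_1$ sits strictly inside $P$.

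The missing idea is to insert the last vertex $q_b$ of the shorter branch into the longer branch, at or after position $b$.

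\emph{Case $q_b\to p_b$.} The vertex $p_b$ exists because $b\le a$, which is exactly where that hypothesis is used. Take the branches $r\,q_1\cdots q_b\,p_b\cdots p_a$ and $r\,p_1\cdots p_{b-1}$. They have lengths $a+1$ and $b-1$; when $b=1$ the second branch is empty and the part is deleted.

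\emph{Case $p_b\to q_b$.} Let $m$ be the largest index in $\{b,\dots,a\}$ with $p_m\to q_b$. If $m=a$, append $q_b$ after $p_a$. If $m<a$, maximality gives $q_b\to p_{m+1}$, so $r\,p_1\cdots p_m\,q_b\,p_{m+1}\cdots p_a$ is a directed path of length $a+1$. In both subcases the other branch is $r\,q_1\cdots q_{b-1}$.

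Together with your reduction, this proves the lemma, and it replaces the alignment bookkeeping entirely.
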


For positive integers $N$ and $k\le N$, the balanced $k$-part partition of $N$ is the unique partition whose largest and smallest parts differ by at most one.

\begin{lemma}\label{lem:balanced}
Every $k$-part partition of $N$ dominates the balanced $k$-part partition of $N$.
\end{lemma}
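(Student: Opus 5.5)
Let $\lambda=(\lambda_1,\ldots,\lambda_k)$ be a $k$-part partition of $N$, and let $\beta=(\beta_1,\ldots,\beta_k)$ be the balanced one. Write $N=qk+r$ with $0\le r<k$. Then $\beta$ has $r$ parts equal to $q+1$ followed by $k-r$ parts equal to $q$. The plan is to verify the dominance inequalities directly,
\[
\sum_{i=1}^j\lambda_i\ \ge\ \sum_{i=1}^j\beta_i\qquad\text{for every } j\ge1.
\]
For $j\ge k$ both sides equal $N$, since the trailing parts are zero, so only $1\le j\le k$ needs attention.

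The key step is the averaging argument. Because $\lambda$ is nonincreasing, the average of its first $j$ parts is at least the average of all $k$ parts. Indeed, if $\frac1j\sum_{i\le j}\lambda_i<\frac Nk$, then $\lambda_j<\frac Nk$, so every later part is also below $\frac Nk$. The overall average would then fall below $\frac Nk$, a contradiction. Hence
\[
S_j:=\sum_{i\le j}\lambda_i\ \ge\ \frac{jN}{k},
\]
and since $S_j$ is an integer, $S_j\ge\ceil{jN/k}$.

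The remaining step is to show that the balanced prefix sums $B_j=\sum_{i\le j}\beta_i$ satisfy $B_j=\ceil{jN/k}$, or at least $B_j\le\ceil{jN/k}$. This is where a small computation is needed, and I expect it to be the only mildly fiddly point. Here $B_j=jq+\min(j,r)$, while $jN/k=jq+jr/k$. There are two cases.
\begin{itemize}
\item If $j\le r$, the claim reads $j\le\ceil{jr/k}$. This fails in general: for example $r=1$, $k=3$, $j=1$ gives $\ceil{1/3}=1$, which is fine, but $j=1$, $r=2$, $k=5$ gives $\ceil{2/5}=1=j$, also fine. In general $jr/k>j-1$ is not automatic, so the averaging bound must be replaced by a sharper comparison.
\item If $j>r$, the claim is $r\le\ceil{jr/k}$, which again is not automatic.
\end{itemize}

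So instead I would argue by cases on the size of the parts, which handles both situations cleanly.

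If $\lambda_j\ge q+1$, then every earlier part is also at least $q+1$. Hence $S_j\ge j(q+1)\ge B_j$.

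If $\lambda_j\le q$, then $\lambda_i\le q$ for every $i>j$. Therefore
\[
S_j=N-\sum_{i>j}\lambda_i\ \ge\ N-(k-j)q .
\]
On the other hand, the balanced partition has every part after position $j$ at least $q$, and by counting its tail parts we get
\[
B_j=N-\sum_{i>j}\beta_i=N-(k-j)q-\max(0,\,r-j).
\]
This gives $B_j\le N-(k-j)q\le S_j$.

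In both cases $S_j\ge B_j$, which proves $\lambda\succeq\beta$. The main obstacle is simply choosing this two-case comparison rather than the averaging bound, which is too weak. Once that choice is made, the verification is routine.
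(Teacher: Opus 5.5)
Your two-case argument is correct, but it takes a different route from the paper. The paper uses a transfer move: if $\lambda$ is not balanced, move one unit from a largest part to a smallest part and re-sort. This weakly lowers the partition in the dominance order, and iterating ends at the balanced partition. That one-liner shows conceptually why $\beta$ is the minimum of the dominance order on $k$-part partitions. It does, however, leave two facts unproved: that a single transfer yields a dominated partition, and that the process terminates (for instance because $\sum_i\lambda_i^2$ strictly decreases). Your direct verification avoids both. You split on whether $\lambda_j\ge q+1$, comparing heads via $S_j\ge j(q+1)\ge B_j$, or $\lambda_j\le q$, comparing tails via $S_j\ge N-(k-j)q\ge B_j$. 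This checks every prefix inequality explicitly and is fully self-contained.

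One cleanup: your middle paragraph on the averaging bound is muddled. The two ``examples'' you give are cases where $B_j\le\lceil jN/k\rceil$ actually holds, so they do not show failure. Your conclusion there is nevertheless right. For $N=7$ and $k=5$ one has $\beta=(2,2,1,1,1)$, and $B_2=4>3=\lceil 14/5\rceil$, so the bound $S_j\ge\lceil jN/k\rceil$ genuinely cannot suffice. Either cite such a counterexample or drop the detour, since the two-case argument stands on its own.
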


\begin{proof}
If a $k$-part partition is not balanced, transfer one unit from a largest part to a smallest part and reorder. This weakly decreases the partition in the dominance order, and iteration ends at the balanced partition.
\end{proof}

For a bipartite graph $G=(P,Q;E)$ and $Y\subseteq Q$, let
$\Gamma_G(Y)=\{p\in P:\ py\in E\text{ for some }y\in Y\}$ be the neighbourhood
of $Y$; we drop the subscript when $G$ is clear from the context, and write
$\Gamma_{B}(Y)=\Gamma_G(Y)\cap B$ for $B\subseteq P$. Let $\nu(G)$ be the
maximum matching number of $G$ and define
\[
\defi(G)=|Q|-\nu(G)=\max_{Y\subseteq Q}\bigl(|Y|-|\Gamma(Y)|\bigr).
\]
The second equality is the deficiency form of Hall's theorem, due to Ore~\cite{Ore1955}.

For a map $f\colon Q\to P$ and $p\in P$, the \emph{load} of $p$ is $|f^{-1}(p)|$, and $p$ is \emph{used} if its load is positive. We say $f$
has \emph{capacity} $2$ if every load is at most $2$.

\begin{lemma}\label{lem:capacity}
Let $G=(P,Q;E)$ satisfy $|Y|\le2|\Gamma(Y)|$ for every $Y\subseteq Q$. Then
there is a map $f\colon Q\to P$ of capacity $2$ with $qf(q)\in E$ for every
$q\in Q$, and exactly $\defi(G)$ vertices of $P$ have load $2$.
\end{lemma}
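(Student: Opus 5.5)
We reduce the statement to ordinary matchings by duplicating $P$. Form the bipartite graph $G_2=(P\cup P',Q;E_2)$, where $P'=\{p':p\in P\}$ is a disjoint copy of $P$. Each $p'$ is adjacent to exactly the vertices of $Q$ that $p$ is adjacent to in $G$. For every $Y\subseteq Q$ we have $|\Gamma_{G_2}(Y)|=2|\Gamma_G(Y)|\ge|Y|$ by hypothesis. Hall's theorem therefore gives a matching of $G_2$ saturating $Q$. Identifying each $p'$ with $p$ turns this matching into a map $f\colon Q\to P$ of capacity $2$ with $qf(q)\in E$ for every $q\in Q$. So capacity-$2$ maps exist, and the remaining task is to choose one with exactly $\defi(G)$ vertices of load $2$.

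First we show that every capacity-$2$ map has at least $\defi(G)$ vertices of load $2$. Given such an $f$, pick for each $p\in f(Q)$ one preimage of $p$. These choices form a matching of $G$ of size $|f(Q)|$, so $|f(Q)|\le\nu(G)$. Let $t_1$ and $t_2$ be the numbers of vertices of load $1$ and load $2$. Then $|Q|=t_1+2t_2=|f(Q)|+t_2$, which gives $t_2=|Q|-|f(Q)|\ge|Q|-\nu(G)=\defi(G)$.

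Next we construct a capacity-$2$ map attaining equality, which requires $|f(Q)|=\nu(G)$. The plan is to start from a maximum matching $M$ of $G$ and extend it. In $G_2$, take $M$ using only the original copies in $P$. Since $G_2$ has a $Q$-saturating matching, repeated augmentation in $G_2$ turns $M$ into a $Q$-saturating matching $M_2$.

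The key fact is that augmenting paths never unmatch a vertex of $P\cup P'$. Hence every $p\in P$ matched in $M$ stays matched in $M_2$. The projected map $f$ then satisfies $f(Q)\supseteq V(M)\cap P$, so $|f(Q)|\ge\nu(G)$. Combined with the reverse inequality from the first step, $|f(Q)|=\nu(G)$, and the counting identity gives exactly $\defi(G)$ vertices of load $2$.

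The only delicate step is this monotonicity of matched vertices under augmentation. It is standard: along an augmenting path, every interior vertex is matched both before and after the augmentation, and the two endpoints become newly matched. So the set of matched vertices only grows, and nothing in $P$ that was matched by $M$ is ever lost.
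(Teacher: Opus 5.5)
Your proof is correct. It shares the paper's outer frame: the doubling of $P$ with Hall's theorem, the bound $|f(Q)|\le\nu(G)$ from choosing one preimage per used vertex, and the count $t_2=|Q|-|f(Q)|$. The step that forces $|f(Q)|=\nu(G)$, however, runs in the opposite direction.

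The paper fixes a capacity-$2$ map $f$ whose support $S$ is as large as possible. If $|S|<\nu(G)$, it takes an augmenting path \emph{in $G$} relative to the matching of representative edges and reassigns the $Q$-vertices along it. This needs a hand check that no previously used vertex becomes unused, including the special case of the old parent of $q_0$. So the paper enlarges the support while keeping $Q$ fully assigned.

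You start from a maximum matching of $G$, so the support already has size $\nu(G)$. You then enlarge the coverage of $Q$ by augmenting in the doubled graph $G_2$. The invariant you need is simply the textbook fact that augmentation never unmatches a vertex, together with Berge's theorem to guarantee augmenting paths until $Q$ is saturated.

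Your route is constructive and replaces the paper's ad hoc reassignment check with a standard matching property. The paper's route stays inside $G$ and is phrased as an extremal choice. Neither uses anything beyond Hall's and Berge's theorems.
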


\begin{proof}
Replace each vertex of $P$ by two copies. The hypothesis and Hall's theorem give a map $f:Q\to P$ of load at most $2$. Choose $f$ so that
its support $S=\{p\in P:f^{-1}(p)\ne\varnothing\}$ has maximum size, and select one representative edge at each vertex of $S$. These edges form a matching $M$ of size $|S|$, so $|S|\le\nu(G)$.

Suppose that $|S|<\nu(G)$, and let
$q_0p_1q_1\cdots q_{r-1}p_r$ be an $M$-augmenting path, where
$p_iq_i\in M$ for $1\le i<r$. Since $q_0$ is uncovered by $M$ but
assigned by $f$, it is the nonrepresentative member of a load-$2$ vertex. Simultaneously reassign $q_{i-1}$ to $p_i$ for
$1\le i\le r$. Every previously used vertex remains used: each intermediate vertex exchanges its representative for the preceding vertex on the path, while the old parent of $q_0$ either retains its
other child or is one of the intermediate vertices. On the other hand, $p_r\notin S$ becomes used, contradicting the maximality of
$|S|$. Hence $|S|=\nu(G)$.

Since every positive load is $1$ or $2$, the number of vertices with load $2$ is
\[
|Q|-|S|=|Q|-\nu(G)=\defi(G).
\]
\end{proof}

\section{A shallow spanning claw}\label{sec:shallow}

\begin{lemma}\label{lem:maxroot}
Let $S$ be a tournament on at least two vertices and let $v$ have maximum outdegree. Let $P=N_S^+(v)$ and $Q=N_S^-(v)$, and let $G_v=(P,Q;E)$ be the bipartite graph in which $uz\in E$ if and only if $u\to z$. If $d=\defi(G_v)$, then $S$ contains a spanning claw rooted at $v$ whose branches have lengths in $\{1,2,3\}$, exactly $d$ of them having length $3$.
\end{lemma}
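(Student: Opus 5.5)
Our approach is to check the hypothesis of Lemma~\ref{lem:capacity} for $G_v$, namely $|Y|\le 2|\Gamma(Y)|$ for every $Y\subseteq Q$. Once that holds, the capacity-$2$ map gives the claw directly.

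Fix $Y\subseteq Q$ and set $A=\Gamma(Y)\subseteq P$. Every vertex $y\in Y$ is dominated by $v$'s out-neighbours only through $A$. So the out-neighbourhood of $y$ is contained in
\[
N^+(y)\subseteq \{v\}\cup A\cup (Q\setminus\{y\}).
\]
Here we use that $y\in Q$ gives $y\to v$, and that every $u\in P\setminus A$ satisfies $u\to y$. We then average over $T[Y]$. Some $y\in Y$ has outdegree at least $(|Y|-1)/2$ inside $Y$, and all of these out-neighbours lie in $Q$. Counting $v$ as well, this gives
\[
d^+(y)\ge 1+\frac{|Y|-1}{2}+\bigl|N^+(y)\cap A\bigr|.
\]
A crude bound is already enough. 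Since $A\to y$ for no vertex outside $A$, we only need $d^+(y)\ge 1+(|Y|-1)/2$. Maximality of $v$ gives $d^+(y)\le d^+(v)=|P|$, but that is the wrong direction. A better route is to bound $|Y|$ by counting arcs. The arcs from $A$ to $Y$ number at most $|A||Y|$. The in-degree sum of $Y$ within $T[Y\cup\{v\}\cup(P\setminus A)]$ involves the $|P\setminus A|$ vertices of $P$ outside $A$, which all point to every $y$.

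The cleanest argument is to pick $y\in Y$ with $d^+_{T[Y]}(y)\ge(|Y|-1)/2$. Its out-neighbours include $v$, at least $(|Y|-1)/2$ vertices of $Y$, and no vertex of $P\setminus A$. Its in-neighbours include all of $P\setminus A$. We then compare with $v$. This gives $1+(|Y|-1)/2+|A|+|Q\setminus Y|\ge d^+(y)$, which only bounds $d^+(y)$ from above. Instead we use the following. Since $d^+(y)\le |P|$ trivially holds, we argue on in-degrees. Because $v$ has maximum outdegree, $y$ has in-degree at least $|Q|$. The in-neighbours of $y$ lie in $(P\setminus A)\cup(\text{$A$-vertices pointing to }y)\cup(Q\setminus\{y\})$, and this alone does not immediately bound $|Y|$.

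The step I expect to be the main obstacle is exactly this Hall-type inequality $|Y|\le 2|\Gamma(Y)|$. I would try the standard argument used for second neighbourhoods. Choose $y\in Y$ that is a king of $T[Y]$ in the sense of maximising outdegree in $T[Y]$, so $d^+_{T[Y]}(y)\ge(|Y|-1)/2$. We then compare $d^+(y)$ with $d^+(v)$ via the set $N^+(y)$. That set contains $v$, at least $(|Y|-1)/2$ vertices of $Y$, and otherwise only vertices of $A$ or of $Q\setminus Y$. Combining this with the degree comparisons and summing over all $y\in Y$ should give $\binom{|Y|}{2}+|Y|\le e(Y\cup\{v\}, \cdot)$. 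Bounding that by $|Y|\,|P|$ minus the contribution of $P\setminus A$, after averaging, should yield $|Y|-1\le 2|A|-1$, which is $|Y|\le 2|A|$. If this averaging fails, I would fall back on a minimal counterexample $Y$ and use $e(A,Y)\le|A||Y|$ together with each $y$ having at most $|P|$ out-neighbours.

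Given the inequality, Lemma~\ref{lem:capacity} supplies a map $f\colon Q\to P$ of capacity $2$ along arcs $f(z)\to z$, with exactly $d$ vertices of load $2$. We build the claw as follows. For each $u\in P$, form the branch $v\to u$. If $u$ has load $1$ with child $z$, use $v\to u\to z$. If $u$ has load $2$ with children $z_1,z_2$, orient so that $z_1\to z_2$ and use $v\to u\to z_1\to z_2$. This covers all vertices, the branch lengths lie in $\{1,2,3\}$, and exactly $d$ of them have length $3$.
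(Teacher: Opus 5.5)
There is a genuine gap. The inequality $|Y|\le 2|\Gamma(Y)|$ is the whole content of the lemma beyond Lemma~\ref{lem:capacity}, and you flag it as the main obstacle. It is never proved: your final sketch only says the averaging ``should'' give it.

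The attempts stall because you have the key orientation backwards. You assert that every $u\in P\setminus A$ satisfies $u\to y$. But $\Gamma(Y)$ is by definition the set of $p\in P$ with $p\to y$ for \emph{some} $y\in Y$. So a vertex of $P\setminus A$ has no arc into $Y$, which in a tournament means $Y\to P\setminus A$. Consequently:
\begin{itemize}
\item your containment $N^+(y)\subseteq\{v\}\cup A\cup(Q\setminus\{y\})$ is false; in fact $P\setminus A\subseteq N^+(y)$;
\item your description of the in-neighbours of $y$ is wrong for the same reason;
\item under the orientation you state, a double count could give nothing better than $|Y|\le 2|P|-1$, which is useless.
\end{itemize}

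With the correct orientation the step is a single double count, which is what the paper does. For nonempty $Y$ write $\zeta=|Y|$, $a=|A|$ and $b=|P\setminus A|$. Maximality of $v$ gives $d^+(y)\le |P|=a+b$ for every $y\in Y$. The out-arcs from $Y$ include the $\zeta$ arcs to $v$, the $\zeta b$ arcs to $P\setminus A$, and the $\binom{\zeta}{2}$ arcs inside $Y$. Hence
\[
\zeta(a+b)\ge \zeta+\zeta b+\binom{\zeta}{2},
\]
that is, $a\ge(\zeta+1)/2$, so $|Y|\le 2|\Gamma(Y)|-1$. No special vertex of $S[Y]$ and no minimal counterexample is needed.

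Your in-degree idea also works once the orientation is fixed. The in-neighbours of $y$ lie in $A\cup(Q\setminus\{y\})$, and $d^-(y)\ge|Q|$. Summing over $Y$ gives $\zeta|Q|\le a\zeta+\zeta(|Q|-\zeta)+\binom{\zeta}{2}$, which yields the same bound.

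Your last paragraph, which builds the claw from the capacity-$2$ map, is correct and matches the paper.
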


\begin{proof}
Let $Z\subseteq Q$ be nonempty, let $A=\Gamma(Z)$ and $B=P\setminus A$, and write $\zeta=|Z|$, $a=|A|$, $b=|B|$. Since $Z\to B$ and $Z\to v$, while every vertex of $Z$ has outdegree at most $|P|=a+b$, summing outdegrees over $Z$ gives $\zeta(a+b)\ge \zeta+\zeta b+\binom{\zeta}{2}$, hence $\zeta a\ge\frac{\zeta(\zeta+1)}2$.
Thus $|Z|\le2|\Gamma(Z)|-1$, so Lemma~\ref{lem:capacity} applies. Assign every vertex of $Q$ to an inneighbour in $P$, with every parent of load at most $2$ and exactly $d$ parents of load $2$. Parents of load $0$, $1$ or $2$ give branches of lengths $1$, $2$ or $3$, respectively; in the last case the two successors are ordered according to the arc between them. These branches are internally disjoint and cover $V(S)$.
\end{proof}

\begin{proof}[Proof of Theorem~\ref{thm:structural}]
Let $T$ have $N+1$ vertices and let $r_0=\floor{N/5}$. Choose a maximum-outdegree vertex $x$, write $P=N_T^+(x)$, $Q=N_T^-(x)$, $p=|P|$, $q=|Q|$, and let $d=\defi(G_x)$ for the bipartite graph from $P$ to $Q$. If $d\le r_0$, Lemma~\ref{lem:maxroot} gives a spanning claw with at most $r_0$ branches of length $3$ and $p\ge\ceil{N/2}\ge\floor{2(N+1)/5}$ branches in total. Assume therefore that $d>r_0$, so
\begin{equation}\label{eq:N5d}
N<5d.
\end{equation}
Choose $Y\subseteq Q$ with $|Y|-|\Gamma(Y)|=d$. Let $A=\Gamma(Y)$, $B=P\setminus A$, $C=Q\setminus Y$, and write $a=|A|$, $b=|B|$, $c=|C|$, $y=|Y|=a+d$ and $g=e(A,Y)$. Since $Y\to B$ and $Y\to x$, summing the outdegrees of the vertices of $Y$ gives $y(a+b)\ge y+yb+\binom y2+ay-g$, and hence $g\ge y(y+1)/2$. As $g\le ay$, we have $a\ge d+1$. Since every vertex of $A$ also has outdegree at most $a+b$, summing their outdegrees and using $a(a+b)\ge\binom a2+g$, we obtain
\begin{equation}\label{eq:basic}
b\ge d+\frac{d(d+1)}{2a}.
\end{equation}
Let $h=d(d+1)/(2a)$, $\eta=b-d-h\ge0$ and $\xi=\eta+c$. These quantities are nonnegative real numbers and need not be integral. We have
\begin{equation}\label{eq:Ndecomp}
N=2a+2d+h+\xi.
\end{equation}

There is a matching from $B$ to $C$ saturating $C$, where $uv$ is an edge, for $u\in B$ and $v\in C$, when $u\to v$. Otherwise Hall's theorem gives $C_0\subseteq C$ with $|\Gamma_B(C_0)|<|C_0|$, while the neighbourhood of $Y\cup C_0$ in $P$ is contained in $A\cup\Gamma_B(C_0)$; this yields deficiency greater than $d$, a contradiction.

Consider $U=T[A\cup Y]$. Every $u\in Y$ points to $x$ and to $B$, so $d_U^+(u)\le a-1$. On the other hand, the average outdegree in $U$ of a vertex of $A$ is at least
\[
\frac{\binom a2+g}{a}\ge \frac{a-1}{2}+\frac{(a+d)(a+d+1)}{2a}=a+d+h>a-1.
\]
Thus a maximum-outdegree vertex $w$ of $U$ belongs to $A$. Write $P'=N_U^+(w)$, $Q'=N_U^-(w)$, $p'=|P'|$, $q'=|Q'|$, and let $S=P'\cap Y$, $s=|S|$. Since $w$ has at most $a-1$ outneighbours in $A$ and $|U|=2a+d$, the preceding inequality gives
\begin{equation}\label{eq:secondroot}
p'\ge a+d+h,
\qquad s\ge d+1+h,
\qquad q'\le a-1-h.
\end{equation}
Moreover, (\ref{eq:N5d}) and (\ref{eq:Ndecomp}) give $\xi<3d-2a-h$. Since $a+3h\ge2\sqrt{3d(d+1)/2}>2d$, it follows that
\begin{equation}\label{eq:eslack}
\xi<d-a+2h.
\end{equation}

Apply Lemma~\ref{lem:maxroot} to $U$ with root $w$. Let $\delta'$ be the number of length-$3$ branches and $\nu'=q'-\delta'$ the number of used parents in $P'$. Let $Z_0\subseteq S$ be the set of zero-load vertices. Since at most $\nu'$ vertices of $S$ are used as parents, $|Z_0|\ge s-\nu'=s-q'+\delta'$.
By (\ref{eq:secondroot}) and (\ref{eq:eslack}), $s-q'\ge d-a+2+2h>\xi+2\ge c$. Choose distinct vertices $y_1,\ldots,y_c\in Z_0$, and let $u_i\to v_i$ for $1\le i\le c$ be a matching from $B$ to $C$ saturating $C$. Since $S\subseteq Y$ and $Y\to B$, extend the corresponding branches to $w\to y_i\to u_i\to v_i$ where $1\le i\le c$.
This covers $C$ and $c$ vertices of $B$.

It remains to insert the $b-c+1$ vertices of $\{x\}\cup(B\setminus\{u_1,\ldots,u_c\})$. Every vertex of $S$ points to all of them. Before the preceding extensions the unused capacity in $S$ is at least $2s-q'$, every vertex of $S$ having capacity $2$, and those extensions use $2c$ units. Thus the remaining capacity exceeds the number $b-c+1$ of vertices still to be inserted, since $b=d+h+\eta$ and (\ref{eq:secondroot}), (\ref{eq:eslack}) give
\[
(2s-q'-2c)-(b-c+1)=2s-q'-(b+c+1)\ge d-a+2+2h-\xi>2.
\]
Hence all remaining vertices can be assigned to $S$, with every parent of load at most $2$; two successors are ordered according to the arc between them. We obtain a spanning claw of $T$ whose branches have length at most $3$.

Let $t$ be the number of length-$3$ branches. The claw in $U$ contributes $\delta'$, the branches through $C$ contribute $c$, and after these extensions at least $|Z_0|-c$ vertices of $S$ still have load zero. Using these parents first for the remaining external vertices gives
\begin{align*}
t&\le\delta'+c+\max\{0,b-c+1-(|Z_0|-c)\}\\
 &\le c+\max\{\delta',b+1-s+q'\}.
\end{align*}
Now $\delta'\le q'\le a-1-h$, while $b+1-s+q'\le a+\eta-1-h$. Consequently,
\[
t\le a+\xi-1-h=N-a-2d-2h-1.
\]
Since $a+2h=a+d(d+1)/a\ge2\sqrt{d(d+1)}>2d$, we have $t<N-4d-1<N/5$ by (\ref{eq:N5d}); hence $t\le\floor{N/5}$.

If the resulting claw has $k$ branches, then $N\le3t+2(k-t)=2k+t$, and therefore
\[
k\ge\ceil{\frac{N-\floor{N/5}}2}\ge\floor{\frac{2(N+1)}5},
\]
where the last inequality follows by considering $N$ modulo $5$. This proves the theorem.
\end{proof}

\section{The lower bound}\label{sec:lower}

\begin{proof}[Proof of Theorem~\ref{thm:lower}]
Let $T$ be a tournament on $n\ge5$ vertices, let $N=n-1$, and let $C(\lambda)$ be an $n$-vertex claw with $k\le\floor{2(N+1)/5}$ branches. Let $\beta$ be the balanced $k$-part partition of $N$. Lemma~\ref{lem:balanced} gives $\lambda\succeq\beta$.

By Theorem~\ref{thm:structural}, $T$ contains a spanning claw $C(\mu)$ such that every part of $\mu$ is at most $3$, at most $\floor{N/5}$ parts are equal to $3$, and $\mu$ has at least $\floor{2(N+1)/5}\ge k$ parts. We claim that $\beta\succeq\mu$. If $k\le N/3$, every part of $\beta$ is at least $3$, so the claim is immediate. Suppose that $k>N/3$. Since $k\le\floor{2(N+1)/5}$ and $n\ge5$, we also have $k\le N/2$, $\beta=(3^r,2^{k-r})$, and $r=N-2k$.
The inequality $5k\le2N+2$ gives $5r\ge N-4$, and hence $r\ge\ceil{(N-4)/5}=\floor{N/5}$. Let $t$ be the number of parts of $\mu$ equal to $3$, so $t\le r$. For $j\le r$, we have $\sum_{i=1}^j\beta_i=3j\ge\sum_{i=1}^j\mu_i$ since every part of $\mu$ is at most $3$, while for $r<j\le k$ we have $\sum_{i=1}^j\mu_i\le3t+2(j-t)\le2j+r=\sum_{i=1}^j\beta_i$; for $j>k$, the prefix sum of $\beta$ is already $N$. Thus $\beta\succeq\mu$.

Therefore $\lambda\succeq\beta\succeq\mu$. Since $T$ contains $C(\mu)$, Lemma~\ref{lem:dominance} gives $C(\lambda)\subseteq T$. As $T$ was arbitrary, $C(\lambda)$ is $n$-unavoidable.
\end{proof}

\section{The upper bound}\label{sec:upper}

For a tournament $T$ and $x\in V(T)$, let $P_x=N_T^+(x)$ and $Q_x=N_T^-(x)$, and let $G_x$ be the bipartite graph with parts $P_x,Q_x$ in which $uv$ is an edge, for $u\in P_x$ and $v\in Q_x$, if and only if $u\to v$.

\begin{lemma}\label{lem:rootobstruction}
If $T$ contains a spanning claw $C(\lambda)$ rooted at $x$, then $\defi(G_x)\le\exc(\lambda)$.
\end{lemma}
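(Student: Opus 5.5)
We bound the deficiency through the Hall form $\defi(G_x)=\max_{Y\subseteq Q_x}\bigl(|Y|-|\Gamma(Y)|\bigr)$. The idea is to read off from the claw an assignment of almost every vertex of $Q_x$ to an in-neighbour in $P_x$, with each element of $P_x$ receiving at most one vertex, leaving at most $\exc(\lambda)$ vertices of $Q_x$ unassigned. This exhibits a matching of size at least $|Q_x|-\exc(\lambda)$, so $\nu(G_x)\ge |Q_x|-\exc(\lambda)$, which is exactly the claim.

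Fix the embedded claw with branches $x=v_0^{(i)}\to v_1^{(i)}\to\cdots\to v_{\lambda_i}^{(i)}$. Every vertex of $T$ other than $x$ lies on exactly one branch, so $P_x$ and $Q_x$ partition $V(T)\setminus\{x\}$. The first vertex $v_1^{(i)}$ of every branch is an out-neighbour of $x$, so it lies in $P_x$. For each vertex $z\in Q_x$, say $z=v_j^{(i)}$ with $j\ge2$, let $m(z)$ be the nearest vertex of $P_x$ preceding $z$ on its branch. Such a vertex exists because $v_1^{(i)}\in P_x$. I would match $z$ to $m(z)$ only when $m(z)$ is the immediate predecessor $v_{j-1}^{(i)}$. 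In that case $m(z)\to z$ is an arc of $T$, hence an edge of $G_x$.

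The matched pairs $(v_{j-1}^{(i)},v_j^{(i)})$ consist of a $P_x$-vertex followed directly by a $Q_x$-vertex. Each $P_x$-vertex has only one successor on its branch, so it is used at most once, and the pairs form a matching. The unmatched vertices of $Q_x$ are those $z=v_j^{(i)}$ whose predecessor also lies in $Q_x$, so $j\ge3$. On branch $i$, such vertices can only be among $v_3^{(i)},\ldots,v_{\lambda_i}^{(i)}$, giving at most $(\lambda_i-2)^+$ of them. Summing over the branches gives at most $\exc(\lambda)$ unmatched vertices, hence $\nu(G_x)\ge|Q_x|-\exc(\lambda)$ and $\defi(G_x)\le\exc(\lambda)$.

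The only point needing care is the bookkeeping of which vertices go unmatched: an unmatched $Q_x$-vertex must be preceded by a $Q_x$-vertex, and this forces its position to be at least $3$ because $v_1^{(i)}\in P_x$. I expect no real obstacle.
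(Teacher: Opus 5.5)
Your proposal is correct and is essentially the paper's proof: you match each vertex of $Q_x$ to its immediate predecessor on its branch whenever that predecessor lies in $P_x$, and since $v_1\in P_x$ this leaves at most $(\lambda_i-2)^+$ unmatched vertices of $Q_x$ on each branch. Your count (unmatched vertices sit at positions $\ge 3$) and the paper's count (the first $Q_x$-vertex on a branch is always matched) are the same bookkeeping, and the auxiliary $m(z)$ is not needed.
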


\begin{proof}
Consider a branch $x=v_0\to v_1\to\cdots\to v_\ell$. Select every branch edge $v_jv_{j+1}$ with $v_j\in P_x$ and $v_{j+1}\in Q_x$. These edges form a matching: two selected edges on one branch could meet only if their common vertex belonged to both $P_x$ and $Q_x$, which is impossible, and distinct branches are internally disjoint. Since $v_1\in P_x$, if the branch contains a vertex of $Q_x$, then its first such vertex is matched. Hence at most $(\ell-2)^+$ vertices of $Q_x$ on this branch remain unmatched. Taking the union of these matchings over all branches leaves at most $\exc(\lambda)$ vertices of $Q_x$ unmatched, and the result follows.
\end{proof}

Let $H$ be a tournament on $[q]$ with positive integer weights $a_1,\ldots,a_q$, and write $a(X)=\sum_{i\in X}a_i$. For $i\in[q]$, let $O_i=N_H^+(i)$ and $I_i=N_H^-(i)$.

\begin{lemma}\label{lem:weightedtemplate}
Suppose that for each $i\in[q]$ there are sets $Y_i\subseteq I_i$ and $Z_i\subseteq O_i$ such that $Y_i\to Z_i$ and $a(Y_i)+a(Z_i)-a(O_i)\ge\rho$, where $\rho>0$. Let $m$ be a positive integer and let $e_1,\ldots,e_q$ be nonnegative integers with $E=\sum_i e_i$. Replace vertex $i$ by a tournament $V_i$ of order $a_i m+e_i$ and orient all arcs between classes according to $H$. Then every vertex $x$ of the resulting tournament satisfies $\defi(G_x)\ge\rho m-E$.
\end{lemma}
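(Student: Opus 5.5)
Fix a vertex $x$ of the blow-up, say $x\in V_i$. The plan is to exhibit a single witness set $Y\subseteq Q_x$ with $|Y|-|\Gamma(Y)|\ge\rho m-E$, and then invoke the deficiency form of Hall's theorem from Section~\ref{sec:prelim}. Write $V(X)=\bigcup_{j\in X}V_j$ for $X\subseteq[q]$. Then $P_x=V(O_i)\cup N^+_{V_i}(x)$ and $Q_x=V(I_i)\cup N^-_{V_i}(x)$.

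Take $Y=V(Y_i)$. This is a subset of $Q_x$ because $Y_i\subseteq I_i$. Next we bound $\Gamma(Y)$, the set of vertices of $P_x$ that point into $Y$:
\begin{itemize}
\item No vertex of $V(Z_i)$ lies in $\Gamma(Y)$, since $Y_i\to Z_i$ in $H$ means every vertex of $Y$ beats every vertex of $V(Z_i)$.
\item Vertices of $N^+_{V_i}(x)$ may lie in $\Gamma(Y)$, but there are at most $|V_i|-1$ of them.
\end{itemize}
Hence
\[
|\Gamma(Y)|\le |V(O_i\setminus Z_i)|+|V_i|-1 .
\]

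Substituting $|V_j|=a_jm+e_j$ gives
\[
|Y|-|\Gamma(Y)|\ge a(Y_i)m-\bigl(a(O_i)-a(Z_i)\bigr)m-\sum_{j\in O_i\setminus Z_i}e_j-a_im-e_i+1 .
\]
This falls short of the target by the term $a_im$. So $Y$ should be enlarged to exploit the vertices of $V_i$ as well.

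The main step is to handle the in-class contribution. Put $Y=V(Y_i)\cup N^-_{V_i}(x)$. The additional vertices $N^-_{V_i}(x)$ might be hit by $N^+_{V_i}(x)$ and by $V(O_i)$, but everything in $P_x$ is already allowed in the count. Then
\[
|Y|=a(Y_i)m+\sum_{j\in Y_i}e_j+|N^-_{V_i}(x)|,
\]
\[
|\Gamma(Y)|\le a(O_i\setminus Z_i)m+\sum_{j\in O_i\setminus Z_i}e_j+|N^+_{V_i}(x)| .
\]
Taking the difference, the in-class terms combine to $|N^-_{V_i}(x)|-|N^+_{V_i}(x)|$, which can be as small as $-(|V_i|-1)$. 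This is still not good enough, because it again loses $a_im$.

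The real fix is to also put $V_i\setminus\{x\}$ into the witness only partially. Simply discarding $N^+_{V_i}(x)$ from $\Gamma$ is not legitimate, so we need the relation $\rho\le a(Y_i)+a(Z_i)-a(O_i)$ to absorb $a_i$. The expected obstacle is therefore to check whether the hypothesis intends $i\in Y_i$ or $i\in Z_i$ to be allowed, in which case the term is absorbed. If it is not allowed, the argument must use that vertices $u\in N^+_{V_i}(x)$ can be handled as follows: either $u$ beats no vertex of $V(Y_i)$, or $u$ lies in $V_i$, which beats all of $V(Y_i)$ only when $Y_i\subseteq O_i$, which is false. Indeed $V(Y_i)\to V_i$, since $Y_i\subseteq I_i$. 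So vertices of $V_i$ do not point into $V(Y_i)$, and with $Y=V(Y_i)$ the set $\Gamma(Y)$ contains no vertex of $V_i$ at all.

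This sharpens the first count to
\[
|\Gamma(Y)|\le a(O_i\setminus Z_i)m+\sum_{j\in O_i\setminus Z_i}e_j .
\]
It then yields
\[
\defi(G_x)\ge |Y|-|\Gamma(Y)|\ge\bigl(a(Y_i)+a(Z_i)-a(O_i)\bigr)m-\sum_j e_j\ge\rho m-E,
\]
using that the $e_j$ are nonnegative. So the final proof is simply the choice $Y=V(Y_i)$ together with these two observations about $\Gamma(Y)$.
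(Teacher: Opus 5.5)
Your final argument is correct and is exactly the paper's proof. You take $Y=V(Y_i)$ and note that $\Gamma(Y)$ contains no vertex of $V(Z_i)$ (since $Y_i\to Z_i$) and no vertex of $V_i$ (since $Y_i\subseteq I_i$), so $\Gamma(Y)\subseteq V(O_i\setminus Z_i)$, and the count gives $\rho m - E$. The intermediate detours, including the first overcount with $|V_i|-1$ and the attempt to enlarge $Y$ by $N^-_{V_i}(x)$, are unnecessary and should be dropped from a write-up.
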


\begin{proof}
Fix $x\in V_i$, and let $\mathcal Y=\bigcup_{j\in Y_i}V_j$ and $\mathcal Z=\bigcup_{j\in Z_i}V_j$. We have $\mathcal Y\subseteq Q_x$, $\mathcal Z\subseteq P_x$, $\mathcal Y\to V_i$ and $\mathcal Y\to\mathcal Z$. Hence $\Gamma_{G_x}(\mathcal Y)\subseteq\bigcup_{j\in O_i\setminus Z_i}V_j$. Writing $e(X)=\sum_{j\in X}e_j$, we get
\begin{align*}
\defi(G_x)&\ge|\mathcal Y|-|\Gamma_{G_x}(\mathcal Y)|\\
&\ge\bigl(a(Y_i)+a(Z_i)-a(O_i)\bigr)m+e(Y_i)+e(Z_i)-e(O_i).
\end{align*}
Since $Z_i\subseteq O_i$, the last three terms equal $e(Y_i)-e(O_i\setminus Z_i)\ge-E$, which proves the claim.
\end{proof}

We use the tournament $H$ on $[12]$ specified in Table~\ref{tab:template}, with weights $(a_1,\ldots,a_{12})=(1,1,1,4,1,1,4,4,1,1,1,1)$, whose sum is $21$. The table also gives sets $Y_i\subseteq N_H^-(i)$ and $Z_i\subseteq O_i$. The template was located by computer search; no computation enters the proof, since Lemma~\ref{lem:weightedtemplate} uses only the exhibited witnesses $(Y_i,Z_i)$. Its cyclic symmetry of order $3$ makes the certificate particularly easy to verify.

\begin{table}[ht]
\centering
\scriptsize
\setlength{\tabcolsep}{2.5pt}
\renewcommand{\arraystretch}{1.12}
\begin{tabularx}{\textwidth}{c c >{\raggedright\arraybackslash}X >{\raggedright\arraybackslash}X >{\raggedright\arraybackslash}p{1.55cm} c}
\toprule
$i$ & $a_i$ & $O_i$ & $Y_i$ & $Z_i$ & $(a(O_i),a(Y_i),a(Z_i))$\\
\midrule
1  &1& $3,5,6,7,9,10,11$ & $8$ & $3,5,6,7$ & $(10,4,7)$\\
2  &1& $1,3,6,8,10,11,12$ & $4$ & $1,8,11,12$ & $(10,4,7)$\\
3  &1& $4,5,7,9,10$ & $1,6,8,11,12$ & $7$ & $(11,8,4)$\\
4  &4& $1,2,8,11,12$ & $3,5,6,7,9,10$ & $\varnothing$ & $(8,9,0)$\\
5  &1& $2,4,6,9,10,11,12$ & $7$ & $2,4,9,10$ & $(10,4,7)$\\
6  &1& $3,4,7,9,10$ & $1,8,11,12$ & $3,7$ & $(11,7,5)$\\
7  &4& $2,4,5,9,10$ & $1,3,6,8,11,12$ & $\varnothing$ & $(8,9,0)$\\
8  &4& $1,3,5,6,7$ & $2,4,9,10,11,12$ & $\varnothing$ & $(8,9,0)$\\
9  &1& $2,4,8,11,12$ & $3,5,6,7,10$ & $4$ & $(11,8,4)$\\
10 &1& $4,8,9,11,12$ & $3,5,6,7$ & $4,9$ & $(11,7,5)$\\
11 &1& $3,6,7,8,12$ & $2,4,9,10$ & $8,12$ & $(11,7,5)$\\
12 &1& $1,3,6,7,8$ & $2,4,9,10,11$ & $8$ & $(11,8,4)$\\
\bottomrule
\end{tabularx}
\caption{The weighted tournament template and its Hall witnesses. The rows form four orbits under the permutation $\pi$ used in Lemma~\ref{lem:certificate}.}
\label{tab:template}
\end{table}

\begin{lemma}\label{lem:certificate}
The data in Table~\ref{tab:template} define a tournament $H$. For every $i\in[12]$, we have $Y_i\subseteq N_H^-(i)$, $Z_i\subseteq O_i$, $Y_i\to Z_i$, and $a(Y_i)+a(Z_i)-a(O_i)=1$.
\end{lemma}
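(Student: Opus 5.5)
Lemma~\ref{lem:certificate} is a finite certificate check, and the plan is to cut the work down by a factor of three using the cyclic symmetry mentioned in the caption. I would first exhibit the permutation $\pi$ of $[12]$ explicitly. The table suggests four orbits of size $3$: $\{4,7,8\}$ (the weight-$4$ classes with $Z_i=\varnothing$), $\{1,2,5\}$ (rows of type $(10,4,7)$), $\{3,9,12\}$ (type $(11,8,4)$) and $\{6,10,11\}$ (type $(11,7,5)$).

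I would pin down $\pi$ by matching out-neighbourhoods. For instance, $O_1=\{3,5,6,7,9,10,11\}$ must map to $O_{\pi(1)}$, and the weight pattern forces $\pi(7)\in\{4,8\}$. Trying $\pi=(1\,2\,5)(3\,12\,9)(4\,7\,8)(6\,11\,10)$ or its inverse, one checks which choice sends each $O_i$ to $O_{\pi(i)}$, each $Y_i$ to $Y_{\pi(i)}$ and each $Z_i$ to $Z_{\pi(i)}$. For the choice $\pi(1)=2$: $\pi(O_1)=\{12,1,11,8,3,6,10\}=O_2$. This already works and confirms the orbit structure. I also need $\pi$ to preserve the weights, which is immediate since each orbit has constant weight.

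Once $\pi$ is an automorphism of the data, the verification reduces to one representative row per orbit, say $i=1,3,4,6$. The checks are as follows.
\begin{enumerate}
\item \emph{$H$ is a tournament.} For each unordered pair $\{i,j\}$, exactly one of $j\in O_i$ and $i\in O_j$ holds. Equivalently, the out-degrees $|O_i|$ sum to $\binom{12}{2}=66$ and no pair is listed in both directions. The out-degrees are $7,7,5,5,7,5,5,5,5,5,5,5$, which sum to $66$. So it suffices to check antisymmetry, that is, $j\in O_i\Rightarrow i\notin O_j$. By symmetry, this only needs checking for $i$ among the orbit representatives.
\item \emph{$Y_i\subseteq I_i$ and $Z_i\subseteq O_i$.} For $Z_i$ this is read off the table. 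For $Y_i$ it means $i\in O_j$ for each $j\in Y_i$; for example, $1\in O_8$ holds.
\item \emph{$Y_i\to Z_i$.} For each $j\in Y_i$ I check $Z_i\subseteq O_j$; for example, $O_8\supseteq\{3,5,6,7\}$. Rows $4$, $7$ and $8$ are trivial since $Z_i=\varnothing$.
\item \emph{The weight identity.} I recompute $(a(O_i),a(Y_i),a(Z_i))$ and confirm that $a(Y_i)+a(Z_i)-a(O_i)$ equals $4+7-10=8+4-11=9+0-8=7+5-11=1$.
\end{enumerate}

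The only real obstacle is identifying $\pi$ correctly and verifying that it is an automorphism of the whole table, including $Y_i$ and $Z_i$, not just of $H$. This is twelve set-image computations. If I preferred to avoid this step, I could simply carry out the four checks above directly for all twelve rows. That is still routine, only longer.
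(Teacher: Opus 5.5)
Your proposal is correct and matches the paper's proof: the same permutation $\pi=(1\,2\,5)(4\,7\,8)(3\,12\,9)(6\,11\,10)$, equivariance of the weights and of $O_i$, $Y_i$, $Z_i$, reduction to the representatives $i\in\{1,3,4,6\}$, and the same four checks. The only variation is your use of the out-degree sum $66=\binom{12}{2}$ (together with $i\notin O_i$) to reduce the tournament property to antisymmetry, which is harmless. Also, you need not decide between $\pi$ and $\pi^{-1}$, since the inverse of an automorphism of the data is again an automorphism.
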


\begin{proof}
Let $\pi=(1\,2\,5)(4\,7\,8)(3\,12\,9)(6\,11\,10)$.
Inspection of the table gives $a_{\pi(i)}=a_i, O_{\pi(i)}=\pi(O_i), Y_{\pi(i)}=\pi(Y_i)$, and $Z_{\pi(i)}=\pi(Z_i)$
for every $i$. Since $\pi$ has order $3$ and no fixed point, every orbit of unordered pairs has size $3$ and contains a pair one of whose members lies in $\{1,3,4,6\}$; thus it suffices to check the four representatives $i\in\{1,3,4,6\}$. For these representatives, the $O_i$ column verifies that exactly one of $j\in O_i$ and $i\in O_j$ holds for every $j\ne i$, and also gives $Y_i\subseteq N_H^-(i)$ and $Z_i\subseteq O_i$. The witness relations are
\[
8\to\{3,5,6,7\},\quad
\{1,6,8,11,12\}\to7,\quad
\{3,5,6,7,9,10\}\to\varnothing,\quad
\{1,8,11,12\}\to\{3,7\},
\]
while the corresponding triples $(a(O_i),a(Y_i),a(Z_i))$ are
$(10,4,7),(11,8,4),(8,9,0)$, and $(11,7,5)$.
Each triple has its last two entries summing to one more than the first, and equivariance under $\pi$ proves all assertions.
\end{proof}

\begin{proof}[Proof of Theorem~\ref{thm:upper}]
Write $n=21m+\ell$, where $0\le\ell\le20$. Since $n\ge441$, we have $m\ge21\ge\ell+1$. Choose nonnegative integers $e_1,\ldots,e_{12}$ with $\sum_i e_i=\ell$, replace vertex $i$ of $H$ by a tournament $V_i$ of order $a_i m+e_i$, and orient all arcs between classes according to $H$. Denote the resulting tournament by $T_n$.

Lemmas~\ref{lem:weightedtemplate} and \ref{lem:certificate}, with $\rho=1$ and $E=\ell$, show that every $x\in V(T_n)$ satisfies $\defi(G_x)\ge m-\ell$. Set $r=m-\ell-1$ and $s=9m+2\ell+1$. Then $r,s\ge0$, $3r+2s=n-1$, and $r+s=10m+\ell$, so $C(3^r,2^s)$ is an $n$-vertex claw of the required degree. If it were contained in $T_n$ with root $x$, Lemma~\ref{lem:rootobstruction} would give $\defi(G_x)\le\exc(3^r,2^s)=r=m-\ell-1$, a contradiction.
\end{proof}

\begin{remark}
For $n=21m$, the construction avoids $C(3^{m-1},2^{9m+1})$, whose degree is exactly $10m=10n/21$.
\end{remark}

\section{Two bottleneck parameters}\label{sec:bottleneck}

For a tournament $T$ on $N+1$ vertices, let $\tau(T)$ be the minimum number of length-$3$ branches in a spanning claw whose branch lengths belong to $\{1,2,3\}$. This parameter is well-defined by Lemma~\ref{lem:maxroot}. Define
\[
\theta=\limsup_{N\to\infty}\frac1N\max_{|T|=N+1}\tau(T).
\]

\begin{proposition}\label{prop:theta}
We have
\[
\liminf_{n\to\infty}\frac{u(n)}n\ge\frac12-\frac{\theta}{2}.
\]
\end{proposition}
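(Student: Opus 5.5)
The plan is to repeat the proof of Theorem~\ref{thm:lower}, replacing the bound $\floor{N/5}$ on length-$3$ branches by $\tau(T)$ and using the definition of $\theta$. Fix $\varepsilon>0$. By the definition of $\theta$ as a limsup, there is $N_0$ such that every tournament $T$ on $N+1\ge N_0+1$ vertices satisfies $\tau(T)\le(\theta+\varepsilon)N$. We may assume $\theta+\varepsilon<1$; note that $\theta\le 1/5$ already, by Theorem~\ref{thm:structural}.

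Fix such a $T$ and let $t=\tau(T)$. Take a spanning claw $C(\mu)$ whose parts lie in $\{1,2,3\}$ and which has exactly $t$ parts equal to $3$. Its number of parts $k_\mu$ satisfies $N\le 2k_\mu+t$. Set
\[
k^*=\floor{\tfrac12\bigl(N-(\theta+\varepsilon)N\bigr)},
\]
so that $k^*\le\ceil{(N-t)/2}\le k_\mu$.

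Let $C(\lambda)$ be an $n$-vertex claw with $k\le k^*$ branches, and let $\beta$ be the balanced $k$-part partition of $N$. By Lemma~\ref{lem:balanced}, $\lambda\succeq\beta$. We show $\beta\succeq\mu$ exactly as in the proof of Theorem~\ref{thm:lower}, in two cases.

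\textbf{Case $k\le N/3$.} Every part of $\beta$ is at least $3$, while every part of $\mu$ is at most $3$, so $\beta\succeq\mu$ immediately.

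\textbf{Case $k>N/3$.} Since $k\le k^*\le N/2$, we have $\beta=(3^r,2^{k-r})$ with $r=N-2k$. From $2k\le N-t$ we get $r\ge t$. The prefix-sum comparison used in the proof of Theorem~\ref{thm:lower} then applies verbatim:
\begin{itemize}
\item for $j\le r$, the prefix sum of $\beta$ is $3j$, which bounds the prefix sum of $\mu$;
\item for $r<j\le k$, the prefix sum of $\mu$ is at most $3t+2(j-t)\le 2j+r$, which is the prefix sum of $\beta$;
\item for $j>k$, the prefix sum of $\beta$ is already $N$.
\end{itemize}

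In both cases $\lambda\succeq\beta\succeq\mu$, so Lemma~\ref{lem:dominance} gives $C(\lambda)\subseteq T$. Since $T$ was arbitrary, $u(n)\ge k^*\ge\frac{1-\theta-\varepsilon}{2}(n-1)-1$ for all $n>N_0$. Dividing by $n$ and letting $n\to\infty$ gives $\liminf u(n)/n\ge\frac12-\frac{\theta+\varepsilon}2$. Letting $\varepsilon\to0$ yields the proposition.

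The main point needing care is that $\theta$ is a limsup over $N$ of a maximum over all tournaments. Only eventual upper bounds on $\tau$ are available, which is why the argument works with an arbitrary $\varepsilon$ and only for large $n$. The other step to check is $k^*\le N/2$, which is needed for $\beta$ to have the form $(3^r,2^{k-r})$; it holds because $\theta+\varepsilon\ge 0$.
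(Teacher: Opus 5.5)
Your proof is correct and follows essentially the same route as the paper. Both fix $\varepsilon$, use the eventual bound $\tau(T)\le(\theta+\varepsilon)N$, and rerun the dominance chain $\lambda\succeq\beta\succeq\mu$ from the proof of Theorem~\ref{thm:lower}, with $r=N-2k\ge t$ for $k\le\floor{(1-\theta-\varepsilon)N/2}$.
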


\begin{proof}
Fix $0<\varepsilon<1-\theta$. For all sufficiently large $N$, every tournament on $N+1$ vertices contains a spanning claw $C(\mu)$ with at most $(\theta+\varepsilon)N$ parts equal to $3$. If $C(\lambda)$ has $k\le\floor{(1-\theta-\varepsilon)N/2}$ branches and $\beta$ is the balanced $k$-part partition of $N$, then $C(\mu)$ has at least $k$ branches: if it has $b$ branches and $t$ branches of length $3$, then $N\le2b+t\le2b+(\theta+\varepsilon)N$. If $k\le N/3$, every part of $\beta$ is at least $3$; otherwise $\beta=(3^r,2^{k-r})$ with
\[
r=N-2k\ge(\theta+\varepsilon)N\ge t.
\]
In either case $\beta\succeq\mu$, and Lemmas~\ref{lem:balanced} and \ref{lem:dominance} show that every such $C(\lambda)$ is unavoidable. Hence
\[
u(N+1)\ge\floor{\frac{(1-\theta-\varepsilon)N}{2}}
\]
for all sufficiently large $N$. Dividing by $N+1$, taking the lower limit and then letting $\varepsilon\to 0$ proves the result.
\end{proof}

The upper-bound argument has a parallel formulation. Call the data in Lemma~\ref{lem:weightedtemplate} a weighted Hall certificate, and let $W=\sum_i a_i$ be its total weight and $\rho=\min_i\{a(Y_i)+a(Z_i)-a(O_i)\}$ its positive integer surplus.

\begin{proposition}\label{prop:ratio}
If a weighted Hall certificate of total weight $W$ and surplus $\rho>0$ exists, then
$\clawconstant\le\frac12-\frac{\rho}{2W}$.
\end{proposition}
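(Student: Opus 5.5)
The plan is to generalize the proof of Theorem~\ref{thm:upper}, replacing the specific template with an arbitrary weighted Hall certificate. We blow it up with $m$ large, find a claw $C(3^r,2^s)$ whose excess $r$ is just below the deficiency lower bound $\rho m-E$, and read off the degree.

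Fix $n$ and write $n=Wm+\ell$ with $0\le\ell\le W-1$. Choose nonnegative integers $e_1,\ldots,e_q$ with $\sum_i e_i=\ell$, and replace vertex $i$ of $H$ by an arbitrary tournament $V_i$ of order $a_im+e_i$, orienting arcs between classes as in $H$. This gives an $n$-vertex tournament $T_n$. By Lemma~\ref{lem:weightedtemplate}, every vertex $x$ of $T_n$ satisfies $\defi(G_x)\ge\rho m-\ell$.

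Set $N=n-1$ and $r=\rho m-\ell-1$, which is nonnegative for large $n$. We take the claw $C(3^r,2^s)$ with $3r+2s=N$. This requires $N-3r$ to be even. If it is not, replace $r$ by $r-1$, giving $r'\in\{\rho m-\ell-2,\rho m-\ell-1\}$, and set $s=(N-3r')/2$. We may also allow one branch of length $1$ or $4$ to fix parity, but reducing $r$ by one is simplest, and $r'\ge0$ for large $n$. The degree is
\[
k=r'+s=\frac{N-r'}{2}\ge\frac{N-\rho m+\ell+1}{2}.
\]
If $T_n$ contained this claw rooted at some $x$, Lemma~\ref{lem:rootobstruction} would give $\defi(G_x)\le r'\le\rho m-\ell-1$. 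This contradicts the bound above, so $T_n$ avoids $C(3^{r'},2^{s})$ and hence $u(n)\le k-1$.

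It remains to bound $k$ asymptotically. We have $k\le(N-\rho m+\ell+2)/2$ and $m=(n-\ell)/W\ge n/W-1$. Hence
\[
\frac{u(n)}{n}\le\frac12-\frac{\rho}{2W}+O(1/n),
\]
where the error term uses that $\ell<W$ and $\rho$ are constants. Taking $\limsup$ gives $\clawconstant\le\frac12-\frac{\rho}{2W}$.

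The only step needing care is the parity and nonnegativity adjustment. Since it costs only $O(1)$ in the degree, it disappears in the limit, so there is no real obstacle beyond verifying that $r',s\ge0$ once $m>(\ell+2)/\rho$.
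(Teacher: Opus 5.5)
Your argument is the paper's argument except for the choice of test claw: blow up the certificate, apply Lemma~\ref{lem:weightedtemplate} at every root, play Lemma~\ref{lem:rootobstruction} against a claw of excess just below $\rho m-\ell$, fix parity, and pass to the limit. That choice of claw introduces a gap you do not close. For $C(3^{r'},2^s)$ to exist you need $s=(N-3r')/2\ge0$, i.e.\ $3r'\le N$. With $r'\approx\rho m-\ell$ and $N=Wm+\ell-1$, this amounts to roughly $3\rho\le W$. The condition $m>(\ell+2)/\rho$ you cite only guarantees $r'\ge0$. Nothing in the definition of a weighted Hall certificate visibly rules out $\rho/W>1/3$, and for such a certificate your claw would not exist and the argument would give nothing.

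The inequality is true, but it needs an argument. Let $x$ be a maximum-outdegree vertex of $T_n$. The computation in the proof of Lemma~\ref{lem:maxroot} gives $|Z|\le2|\Gamma(Z)|-1$ for every nonempty $Z\subseteq Q_x$. Since $|Q_x|\le N/2$, this yields $\defi(G_x)\le(|Q_x|-1)/2<N/4$. Combined with $\defi(G_x)\ge\rho m-\ell$, it gives $r'<N/4$ and hence $s>0$. Alternatively, Theorem~\ref{thm:structural} together with Proposition~\ref{prop:duality} gives $\rho/W\le1/5$. The paper avoids the issue by using $\lambda=(R+2,2^{k-1})$, which puts the whole excess $R$ into a single branch. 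Then only $k\ge1$ is needed, and this is automatic because $R<D\le\defi(G_x)\le|Q_x|\le n-1$ and $R\equiv n-1\pmod 2$. With either fix, the rest of your proof, including the asymptotic estimate, goes through.
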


\begin{proof}
Write $n=Wm+\ell$, where $0\le\ell<W$, and form a blow-up with class orders $a_i m+e_i$, where the nonnegative integers $e_i$ sum to $\ell$. Lemma~\ref{lem:weightedtemplate} gives $\defi(G_x)\ge D:=\rho m-\ell$ for every root $x$. For all sufficiently large $m$, choose $R\in\{D-1,D-2\}$ with $R\equiv n-1\pmod 2$, and let $k=(n-1-R)/2$. The partition $\lambda=(R+2,2^{k-1})$ has sum $n-1$ and excess $R<D$. Hence Lemma~\ref{lem:rootobstruction} shows that the blow-up avoids $C(\lambda)$, so $u(n)\le k-1$. Since
\[
\frac{k}{n}=\frac{(W-\rho)m+2\ell+O(1)}{2(Wm+\ell)}
=\frac12-\frac{\rho}{2W}+O\left(\frac1n\right),
\]
the assertion follows.
\end{proof}

Let $\sigma$ be the supremum of $\rho/W$ over all weighted Hall certificates. The ratio is unchanged if all weights are multiplied by a common positive integer, and the family of finite integer certificates is countable.

\begin{proposition}\label{prop:duality}
We have $\sigma\le\theta$. More precisely, a certificate of total weight $W$ and surplus $\rho$ gives, for every positive integer $m$, a tournament $T_m$ on $Wm$ vertices such that $\tau(T_m)\ge\rho m$.
\end{proposition}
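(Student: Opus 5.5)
Take $T_m$ to be the blow-up of Lemma~\ref{lem:weightedtemplate} with class orders $a_im$, so $e_i=0$ and $E=0$. Then $|T_m|=Wm$, and Lemma~\ref{lem:weightedtemplate} gives $\defi(G_x)\ge\rho m$ at every vertex $x$.

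To show $\tau(T_m)\ge\rho m$, let $C(\mu)$ be any spanning claw of $T_m$ with parts in $\{1,2,3\}$, rooted at some $x$. Lemma~\ref{lem:rootobstruction} gives $\defi(G_x)\le\exc(\mu)$. Since every part is at most $3$, $\exc(\mu)$ equals the number of parts equal to $3$. Hence every such claw has at least $\rho m$ branches of length $3$, which gives $\tau(T_m)\ge\rho m$.

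For the inequality $\sigma\le\theta$, note that $T_m$ has $N+1=Wm$ vertices, so $N=Wm-1$. Therefore
\[
\frac{\max_{|T|=N+1}\tau(T)}{N}\ge\frac{\rho m}{Wm-1}\ge\frac{\rho}{W}
\]
along the sequence $N=Wm-1$. The $\limsup$ over all $N$ is at least the limit along this subsequence, so $\theta\ge\rho/W$. Taking the supremum over all certificates gives $\sigma\le\theta$.

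There is no serious obstacle. The only points to check are that Lemma~\ref{lem:weightedtemplate} permits $e_i=0$ (it does, since the $e_i$ are nonnegative integers), and that $\tau$ is the minimum over all roots and all admissible claws. The latter is handled because the argument above applies to every root.
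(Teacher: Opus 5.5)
Your proof is correct and matches the paper's argument step for step. You use the equal blow-up with $e_i=0$ and Lemma~\ref{lem:weightedtemplate} to get $\defi(G_x)\ge\rho m$ at every root, then Lemma~\ref{lem:rootobstruction} with excess equal to the number of length-$3$ branches, then pass to the subsequence $N=Wm-1$ (your termwise bound $\rho m/(Wm-1)\ge\rho/W$ just replaces the paper's limit).
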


\begin{proof}
Take the equal blow-up in Lemma~\ref{lem:weightedtemplate}, so $e_i=0$ for every $i$. Then $\defi(G_x)\ge\rho m$ for every root $x$. If a spanning claw in $T_m$ has all branch lengths in $\{1,2,3\}$ and has $t$ branches of length $3$, then its excess is exactly $t$, so Lemma~\ref{lem:rootobstruction} gives $t\ge\defi(G_x)\ge\rho m$. Hence $\tau(T_m)\ge\rho m$, and therefore
\[
\theta\ge\lim_{m\to\infty}\frac{\rho m}{Wm-1}=\frac{\rho}{W}.
\]
Taking the supremum over all certificates proves $\sigma\le\theta$.
\end{proof}

Combining Propositions~\ref{prop:theta}, \ref{prop:ratio} and \ref{prop:duality}, we obtain
\[
\frac12-\frac{\theta}{2}\le\liminf_{n\to\infty}\frac{u(n)}n
\le\clawconstant\le\frac12-\frac{\sigma}{2},
\qquad \sigma\le\theta.
\]
Theorem~\ref{thm:structural} and Table~\ref{tab:template} give $\frac1{21}\le\sigma\le\theta\le\frac15$.
Thus improving the lower bound amounts to decreasing $\theta$, while improving the upper bound amounts to increasing $\sigma$. If $\sigma=\theta$, then the limit of $u(n)/n$ exists and equals $1/2-\theta/2$. It is therefore natural to ask whether $\theta=1/5$, that is, whether Theorem~\ref{thm:structural} is asymptotically sharp, and whether $\sigma=\theta$.

In the parametrisation $1/2-\alpha/2$, the previous upper bound $11/23$ corresponds numerically to $\alpha=1/23$, whereas Table~\ref{tab:template} gives $\rho/W=1/21$. For fixed $q$ and $W$, maximizing $\rho$ is a finite optimization problem over $q$-vertex tournaments, positive integer weights summing to $W$, and witness sets $Y_i,Z_i$. Both proofs are constructive, and the upper-bound certificate can be verified directly from Table~\ref{tab:template}.


\begin{thebibliography}{99}
\small\setlength{\itemsep}{0pt}

\bibitem{Lu1991}
X. Lu, \emph{On claws belonging to every tournament}, Combinatorica \textbf{11} (1991), 173--179.

\bibitem{Lu1993}
X. Lu, \emph{Claws contained in all $n$-tournaments}, Discrete Math. \textbf{119} (1993), 107--111.

\bibitem{Lu1996}
X. Lu, \emph{On avoidable and unavoidable trees}, J. Graph Theory \textbf{22} (1996), 335--346.

\bibitem{LuWangWong}
X. Lu, D.-W. Wang and C.-K. Wong, \emph{On avoidable and unavoidable claws}, Discrete Math. \textbf{184} (1998), 259--265.

\bibitem{MycroftNaia}
R. Mycroft and T. Naia, \emph{Unavoidable trees in tournaments}, Random Structures Algorithms \textbf{53} (2018), 352--385.

\bibitem{Ore1955}
O. Ore, \emph{Graphs and matching theorems}, Duke Math. J. \textbf{22} (1955), 625--639.

\bibitem{Redei}
L. R\'edei, \emph{Ein kombinatorischer Satz}, Acta Litt. Sci. Szeged \textbf{7} (1934), 39--43.

\bibitem{SaksSos}
M. E. Saks and V. T. S\'os, \emph{On unavoidable subgraphs of tournaments}, in \emph{Finite and Infinite Sets, Vol. I, II (Eger, 1981)}, Colloq. Math. Soc. J\'anos Bolyai, vol. 37, North-Holland, Amsterdam, 1984, 663--674.

\bibitem{SteinSurvey}
M. Stein, \emph{Oriented trees and paths in digraphs}, in \emph{Surveys in Combinatorics 2024}, London Math. Soc. Lecture Note Ser., vol. 493, Cambridge Univ. Press, Cambridge, 2024, 271--295.

\end{thebibliography}
\end{document}